\documentclass[preprint,12pt]{elsarticle}
\usepackage{lmodern}%
\usepackage{url}
\usepackage{adjustbox,array,tikz,algorithm,algpseudocode}\usetikzlibrary{positioning}
\usepackage{amsmath,amssymb,amsthm,booktabs,graphicx,xcolor,hyperref}\usepackage[expansion=false]{microtype}
\hypersetup{hidelinks}%
\newtheorem{lemma}{Lemma}\newtheorem{proposition}{Proposition}
\makeatletter\@ifclassloaded{elsarticle}{\newcommand{\appref}[1]{\ref{#1}}}{\newcommand{\appref}[1]{Appendix~\ref{#1}}}\makeatother

\makeatletter\renewcommand\paragraph{\@startsection{paragraph}{4}{\z@}{3.25ex \@plus1ex \@minus.2ex}{-1em}{\normalfont\normalsize\bfseries}}\makeatother

\journal{Computers \& Mathematics with Applications}
\begin{document}
\begin{frontmatter}
\title{Learned smoothing in operator-adapted multigrid: controlled accuracy gains and computational limits}
\author[aff1]{Kumar Kaushik Ranjan\corref{cor1}}
\ead{kumarr@tmi.tolani.edu}
\cortext[cor1]{Corresponding author}
\affiliation[aff1]{organization={Department of Mechanical Engineering, Tolani Maritime Institute},
            addressline={Induri, Talegaon Dabhade},
            city={Pune},
            postcode={410507},
            state={Maharashtra},
            country={India}}
\begin{abstract}
Learned multigrid components are often evaluated at a fixed iteration budget, but such comparisons do not distinguish local
approximation gains from practical solver efficiency. We study this distinction for high-contrast elliptic problems on
unstructured finite element meshes. A learned V-cycle uses operator-dependent transfers, Galerkin coarse operators, near-kernel
deflation and an energy-norm safeguard; only its smoothers are trained. The architecture represents a specified class of fixed
polynomial smoothers and is positively homogeneous in the load. On fresh conductive-channel problems, one learned pass reduces the
energy error by factors of 5.7--6.1 relative to the same cycle with four-step Jacobi-preconditioned conjugate-gradient smoothing;
the corresponding factors are 1.38--1.45 for insulating barriers. In a separate fixed-input test on channel problems, the
fine-level learned smoother attains a smaller energy-norm error than the exact minimum over a five-dimensional Krylov space of
its own input on 97--100 of 100 fresh inputs for each position and training initialisation; this does not establish superiority
over all polynomial-smoothed cycles. When the classical smoothing budget is increased, the paired geometric-mean accuracy crosses
that of the learned cycle between 8 and 14 steps per smoothing call on channels and between 5 and 8 on barriers. The measured CPU
implementation offers no computational advantage over the tested classical solvers, and load changes and floating islands further
restrict its accuracy.

\end{abstract}
\begin{keyword} learned multigrid \sep smoothers \sep Krylov methods \sep operator-adapted wavelets \sep high-contrast elliptic problems \sep finite elements \end{keyword}
\end{frontmatter}
\section{Introduction}\label{sec:intro}
Parametric studies of heterogeneous media solve the same elliptic problem for many coefficient fields. At high contrast the
natural measure of algebraic accuracy is the energy norm, and classical solvers reach it with operator-dependent multilevel
structure: algebraic multigrid builds its interpolation from the matrix \cite{vanek1996,xu2017acta}; operator-adapted wavelets
and gamblets decompose the solution space into energy-orthogonal levels \cite{amaratunga2006,sudarshan2006,owhadi2017};
localized orthogonal decomposition and spectral coarse spaces make the coarse level robust to the contrast
\cite{malqvist2014,peterseim2016,hellman2017,galvis2010,spillane2014geneo}. A natural way to bring learning into this
machinery is to keep the structure and learn one component. Learned smoothers \cite{huang2023}, learned transfer operators
\cite{greenfeld2019,luz2020} and graph-network multilevel preconditioners \cite{wangluo2026,zhanglisaad2026} follow this route,
and they are usually assessed as iterative solvers or preconditioners.

We ask a narrower question about \emph{one pass} through a fixed multilevel cycle on a hierarchy adapted to the operator of each
sample: what does learned smoothing add? A reported accuracy gain can answer three different questions. (i)~Does the learned
smoother improve on one fixed classical smoother configuration with the same number of stages? (ii)~Does it improve on the best
that a specified classical local space can deliver for the \emph{same input}? (iii)~Does the gain survive when the classical
cycle is given more work, and does it lower the complete cost of reaching a given accuracy? The answers need not agree, and a
claim resting on (i) alone can be undone by a stronger classical control. Weak baselines and reporting biases in learned PDE
solvers are documented \cite{mcgreivy2024}, and cost-aware measures such as breakeven complexity have been proposed
\cite{zhang2026breakeven}; the contribution here is a controlled demonstration, in one explicit multilevel construction, of how the three questions separate.

We construct OA-VCB, a learned V-cycle on an unstructured finite element hierarchy (Section~\ref{sec:method}). Its transfers
are computed from the operator of each sample by a few Jacobi steps towards the Schur-complement prolongation of
operator-adapted wavelets; its coarse operators are Galerkin products; its coarsest level is solved exactly; floating
conductive regions, whose indicator functions are near-kernel vectors, are detected from the matrix and deflated; and the output
is the energy-optimal combination of the cycle's directions. Only the smoothers are learned. Section~\ref{sec:controls} defines
the controls: the same cycle with classical smoothers (Chebyshev, and Jacobi- or symmetric Gauss--Seidel-preconditioned
conjugate-gradient steps inside each smoothing call), the exact energy minimum over a Krylov space for a fixed smoother input,
and independently implemented whole-system solvers.

The answers separate. On conductive-channel problems the learned pass is $5.7$--$6.1$ times more accurate than a four-step
Jacobi-PCG smoother in the same cycle, and for the input of a single smoothing call it attains a smaller energy-norm error than
the exact minimum over the five-dimensional Krylov space of that input on $97$--$100$ of $100$ fresh inputs
(Sections~\ref{sec:full} and~\ref{sec:local}). On the reserve operators, however, the paired geometric-mean accuracy of the
classical cycle crosses that of OA-VCB between the tested budgets of eight and fourteen Jacobi-PCG steps per smoothing call on
channels, and between five and eight on barriers; fourteen steps are more accurate on both families (Section~\ref{sec:budget}).
The measured implementation is slower than classical whole-system solvers of the same accuracy by factors of about $12$ to $98$
on the two measured platforms (Section~\ref{sec:cost}), and changes of the load and floating islands restrict the gain further
(Section~\ref{sec:limits}).

Our contributions are the following.
\begin{itemize}
\item An explicit learned-cycle construction with a proved representation property for fixed polynomial smoothers and qualified
properties of positive homogeneity and energy safeguard, which allow component-controlled comparisons. The hierarchy, the
deflation and the Ritz step are classical in origin; we do not present them as new.
\item A numerical distinction between local fixed-input Krylov-space performance and full-cycle performance as the classical
smoothing budget grows, supported by fresh confirmation data and three separately trained initialisations.
\item An evaluation of cost, load dependence and barrier failure cases that bounds the usefulness of the observed gains, with
the controls and per-operator records needed by later studies.
\end{itemize}
We make no claim of a practical solver. Experiments are two-dimensional with $P_1$ elements, two coefficient families on one
family of perforated domains, and the architecture as defined here; further coefficient and domain variations are reported as
supplementary scope checks. The structural properties are proved, the accuracy results are empirical, and we give no
convergence or approximation theory for the learned smoothers. Section~\ref{sec:related} places the work in the literature, and
Section~\ref{sec:discussion} draws the implications for evaluating learned multilevel components.

\section{Related work}\label{sec:related}

\paragraph{Learned multigrid.} Learned prolongations for geometric multigrid \cite{greenfeld2019}, learned restriction and
prolongation matrices \cite{katrutsa2017}, learned algebraic-multigrid prolongations predicted by graph networks \cite{luz2020}
and learned smoothers \cite{huang2023,fanaskov2021} are trained to improve the error propagation of a cycle that is then
iterated. Recent work learns smoothing and transfer operators with graph convolutions \cite{wangluo2026}, learns an
algebraic-multigrid-type hierarchy for use as a preconditioner \cite{zhanglisaad2026}, tunes smoother parameters inside an
algebraic multigrid pressure solver \cite{chillon2026}, or uses a neural operator as preconditioner or coarse correction inside a
Krylov method \cite{rudikov2024,kopanicakova2025,melchers2026,liu2025nodal}. Learned smoothers are therefore established prior
art. Our setting differs in what is measured: one pass through a fixed architecture whose transfers are computed, not learned,
so that the same pass with classical smoothers is a controlled comparison. Multigrid structure has also entered operator learning
as a parameterisation \cite{hexu2019mgnet,he2024mgno,hu2025hmgno}; there the transfers are learned or fixed by the grid.

\paragraph{Operator-adapted multiresolution and high-contrast coarse spaces.} The transfers come from operator-adapted finite
element wavelets
\cite{sudarshan2003,sudarshan2005proc,amaratunga2006,sudarshan2006,sudarshan2005thesis,dheedene2005thesis} and gamblets
\cite{owhadi2017,owhadi2019book}, whose multilevel decomposition is the Schur-complement recursion of
Lemma~\ref{lem:routev}; the same objects are ideal interpolation in algebraic multigrid \cite{vanek1996,xu2017acta}, and the
construction has been extended to differential forms, heterogeneous elasticity and multiscale electromagnetics
\cite{budninskiy2019,chen2019tog,chen2021tog,sik2025}. For high contrast, localisation depends on the contrast unless the coarse
space resolves the conductive structures, and contrast-robust coarse spaces add spectral or problem-adapted functions
\cite{malqvist2014,kornhuber2018,peterseim2016,hellman2017,galvis2010,spillane2014geneo,babuska2011,chung2018cem,ma2022}.
Deflation \cite{frank2001,tang2009} targets the low-energy components represented by selected vectors; the indicator deflation of
Section~\ref{sec:deflation} is its simplest instance. Neural networks have been used to approximate such constructions
\cite{kropfl2021,kropfl2022,kropfl2025,rudikov2025lsino,qiu2025}; here the hierarchy is computed from each sample and learning
acts on the residuals that remain.

\paragraph{Classical smoothers used as controls.} Polynomial smoothers are competitive with Gauss--Seidel in parallel multigrid
\cite{adams2003}, and optimal polynomial smoothers for V-cycles have been analysed \cite{lottes2022}; Krylov methods inside the
cycle give the K-cycle \cite{notay2008}. These are the classical alternatives against which a learned smoother must be measured,
which is why our controls use Chebyshev and Krylov smoothing calls and a local Krylov-space minimum
(Section~\ref{sec:controls}).

\paragraph{Evaluation of learned PDE solvers.} Reporting biases and weak baselines in learned fluid-related PDE solvers are
documented in \cite{mcgreivy2024}; a differentiable finite-difference model with coefficient-valued weights outperforms Fourier
neural operators, U-Nets and ResNets on scalar time-dependent PDEs in \cite{chatain2025}; and breakeven complexity counts the
forward solves needed before a learned solver is cost-effective \cite{zhang2026breakeven}. We do not claim these principles as
new. Our contribution is the controlled separation, in a hybrid multilevel setting, of a local numerical optimum, full-cycle
accuracy at larger classical budgets and complete cost.

\section{An operator-adapted learned V-cycle}\label{sec:method}

\subsection{Setting}\label{sec:setting}
We consider
\begin{equation}\label{eq:pde}
  -\nabla\cdot\big(K(x)\nabla u\big) = f \quad\text{in } \Omega, \qquad u = 0 \text{ on } \Gamma_D,
  \qquad K\nabla u\cdot n = 0 \text{ on } \Gamma_N,
\end{equation}
with a symmetric, uniformly positive definite coefficient $K$, piecewise constant on the elements and varying by several orders
of magnitude. The main experiments use isotropic $K = a\,I$ (conductive channels or insulating barriers); a tensor-valued case
and a plane-strain elasticity case are supplementary scope checks (Section~\ref{sec:scope}). From an unstructured coarse
triangulation $\mathcal{T}_0$, $L$ uniform refinements give nested $P_1$ spaces $V_0\subset\cdots\subset V_L$. On level $k$ the
nodes of $V_{k-1}$ (``coarse'', $C$) precede the nodes created by the refinement (``fine'', $F$), and the geometric prolongation
is $P_k = [I;\,P_{k,F}]$ with $P_{k,F}$ nodal interpolation at edge midpoints. On the finest level the discrete problem is
$Au = F$ with $A = A_L$ symmetric positive definite. A Galerkin solve on the level-$k_0$ space gives $u_c$; with $r = F - Au_c$
and $e = u - u_c = A^{-1}r$, the learned map returns a correction $c = \mathcal{N}(r; A)\approx e$ in one pass. The case $k_0 = 0$
supplies no external coarse solution ($u_c = 0$); the network still contains its own exact coarsest solve. We measure
$\|v\|_A = \sqrt{v^\top A v}$ and the relative energy error $e(\hat u) = \|u - \hat u\|_A/\|u\|_A$.

\subsection{Transfers computed from the operator}\label{sec:routev}
In $C/F$ ordering, write the level-$k$ operator with blocks $A_{CC}$, $A_{CF}$, $A_{FC}$, $A_{FF}$, and define
\begin{equation}\label{eq:ideal}
  \Phi_k^\star = \begin{bmatrix} I\\ -A_{FF}^{-1}A_{FC}\end{bmatrix}, \qquad
  S_k = (\Phi_k^\star)^{\!\top} A_k \Phi_k^\star = A_{CC}-A_{CF}A_{FF}^{-1}A_{FC}.
\end{equation}
\begin{lemma}[two-level block diagonalisation]\label{lem:routev}
With $E_F = [0;\,I]$, $E_F^\top A_k\Phi_k^\star = 0$ and
\begin{equation}\label{eq:inverse}
  A_k^{-1} = \Phi_k^\star S_k^{-1} (\Phi_k^\star)^{\!\top} + E_F A_{FF}^{-1} E_F^\top .
\end{equation}
\end{lemma}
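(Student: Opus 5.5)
The plan is to verify the orthogonality relation by direct block multiplication and then obtain the inverse formula from a block LDL$^\top$ factorisation of $A_k$. Throughout, I use only that $A_k$ is symmetric positive definite. This makes $A_{FF}$ a principal submatrix, hence SPD and invertible, and makes $S_k$ SPD as a congruence of $A_k$ by the full-column-rank matrix $\Phi_k^\star$. So both inverses in \eqref{eq:inverse} exist.

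First step: since $E_F^\top A_k = [A_{FC},\ A_{FF}]$, we get
\[
E_F^\top A_k\Phi_k^\star = A_{FC} - A_{FF}A_{FF}^{-1}A_{FC} = 0 .
\]
In the same way, the top block gives $[A_{CC},\,A_{CF}]\Phi_k^\star = A_{CC}-A_{CF}A_{FF}^{-1}A_{FC} = S_k$. This confirms the second expression for $S_k$ in \eqref{eq:ideal}.

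Second step: form the square matrix $T = [\Phi_k^\star,\ E_F]$. It is block lower triangular with identity diagonal blocks, so it is invertible. The first step and symmetry give
\[
T^\top A_k T = \begin{bmatrix} S_k & 0\\ 0 & A_{FF}\end{bmatrix}.
\]
Here the off-diagonal blocks vanish by $E_F^\top A_k\Phi_k^\star=0$ and its transpose, and $E_F^\top A_k E_F = A_{FF}$. Inverting gives $A_k^{-1} = T\,\mathrm{diag}(S_k^{-1},A_{FF}^{-1})\,T^\top$. Expanding this product column-block by column-block yields exactly \eqref{eq:inverse}.

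There is no real obstacle. The only point needing care is justifying invertibility of $A_{FF}$ and $S_k$ from positive definiteness, and noting that $T$ is invertible. As an alternative check, one can multiply the right-hand side of \eqref{eq:inverse} by $A_k$ and use the first step to see that the result acts as the identity on the columns of $T$, which span the whole space.
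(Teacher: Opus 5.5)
Your proof is correct and follows the paper's own argument: the paper also checks $E_F^\top A_k\Phi_k^\star = 0$ by block multiplication, notes that $T = [\Phi_k^\star\;E_F]$ is unit block lower triangular, and inverts $T^\top A_k T = \operatorname{diag}(S_k, A_{FF})$. The paper leaves implicit the invertibility of $A_{FF}$ and $S_k$, which you justify from positive definiteness.
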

Applied recursively with $A_{k-1} = S_k$, \eqref{eq:inverse} is the exact multilevel decomposition behind operator-adapted
wavelets and, with nodal measurement functions, gamblets; the columns of $\Phi_k^\star$ are $A$-harmonic extensions of the coarse
nodal values, and it is the ideal interpolation of algebraic multigrid (proof in \appref{app:proofs}). The matrix $\Phi_k^\star$ is
dense in general, and its decay depends on the contrast, so the computational transfer is an approximation: $m$ weighted-Jacobi
steps on $A_{FF}X = -A_{FC}$ from the geometric predict,
\begin{equation}\label{eq:jacobi}
  X^{(0)} = P_{k,F}, \quad X^{(j+1)} = X^{(j)} + \omega D_{FF}^{-1}\big(-A_{FC} - A_{FF}X^{(j)}\big), \quad
  \widehat\Phi_k = \begin{bmatrix} I\\ X^{(m)}\end{bmatrix},
\end{equation}
with $D_{FF} = \operatorname{diag}(A_{FF})$, $\omega = 2/3$, $m = 4$ (prolongator smoothing in the sense of smoothed aggregation
\cite{vanek1996}). Increasing $m$ approaches the ideal prolongation only when the Jacobi iteration converges, which positive
definiteness alone does not guarantee; the value $m=4$ was fixed during development and its effect is measured in
\appref{app:sensitivity}. Coarse operators are Galerkin, $A_{k-1} = \widehat\Phi_k^{\!\top} A_k\widehat\Phi_k$, residuals are
restricted with $\widehat\Phi_k^{\!\top}$, and the hierarchy is computed per sample from $A$ alone. The statement that
\eqref{eq:jacobi} reproduces constants (or rigid-body modes) where a local condition holds is Proposition~\ref{prop:constants}
of \appref{app:proofs}.

\subsection{The learned cycle}\label{sec:cycle}
For $k = L, L-1, \dots, 1$ and a right-hand side $b$ on level $k$, one cycle returns three parts,
\begin{equation}\label{eq:cycle}
\begin{aligned}
  x &= \mathcal{S}^{\mathrm{pre}}(b; A_k), &
  e_c &= \mathcal{C}_{k-1}\big(\widehat\Phi_k^{\!\top}(b - A_k x)\big), &
  x_c &= \widehat\Phi_k e_c, \\
  x_p &= \mathcal{S}^{\mathrm{post}}\big(b - A_k(x + x_c); A_k\big), &
  \mathcal{C}_k(b) &= x + x_c + x_p, & \mathcal{C}_0 &= A_0^{-1},
\end{aligned}
\end{equation}
with an exact dense solve on the coarsest level (28 nodes for the perforated square). Only the smoothers are learned. A smoother
maps a right-hand side $b$ on a level with operator $A_k$ to a correction. With $d = \operatorname{diag}(A_k)$, $z = b/d$ and
$\sigma = \|z\|_{\mathrm{rms}}$, the node features are $(z_i/\sigma,\ \log(d_i/\operatorname{median} d))$; an encoder lifts them to a
latent $s\in\mathbb{R}^{n\times C}$, and $T$ steps of
\begin{equation}\label{eq:smoother}
\begin{aligned}
  s_i \leftarrow s_i &+ \mathcal{L}_t\big([\,s_i,\ \textstyle\sum_{j\ne i} W_{ij} V_t s_j\,]\big)\\
       &+ \mathcal{M}_t\big([\,s_i,\ \textstyle\sum_{j\ne i} W_{ij} V_t s_j,\ \overline{m}_i\,]\big),
  \qquad W_{ij} = A_{ij}/A_{ii},
\end{aligned}
\end{equation}
follow, where $\mathcal{L}_t$ and $V_t$ are linear, $\mathcal{M}_t$ is a small MLP and $\overline{m}_i$ the mean of edge messages
$\mathrm{MLP}([s_i,s_j,\delta_{ij},|\delta_{ij}|,W_{ij}])$ over the neighbours of $i$, with $\delta_{ij}$ the edge vector in units of
the level's mesh width. The output is $\sigma\cdot\mathrm{dec}(s)$. The operator enters only through $W$ and $d$ of the level on
which the smoother acts; one pre- and one post-smoother are shared by all levels ($T = \nu = 4$ stages; $20{,}672$ parameters). The latent width is
$C = 15$. Every MLP has one hidden layer with GELU activation: the encoder is a linear map of the two features plus an MLP
$2\to2C\to C$; in each step $V_t$ maps $C\to C$, $\mathcal{L}_t$ maps $2C\to C$, the edge-message MLP is $2C+4\to C\to C$ and
$\mathcal{M}_t$ is $3C\to C\to C$; the decoder is a linear map plus an MLP $C\to C\to1$.
Note that $\sigma$ is a global quantity of the input, so the output at a node depends on residual entries outside any fixed
neighbourhood, which matters for the interpretation of the local controls (Section~\ref{sec:controls}).

\subsection{Near-kernel deflation and the Galerkin output}\label{sec:deflation}\label{sec:output}
High-contrast media can contain \emph{floating islands}: conductive regions enclosed by insulating barriers and not connected to
$\Gamma_D$. Their indicator vectors $z$ satisfy $z^\top A z/z^\top D z \ll 1$, i.e.\ they lie close to the kernel of $A$, and the
nodal hierarchy used here is not guaranteed to approximate such modes unless its coarse levels resolve the island. We detect
them from $A$ alone. Let $G$ be the graph of strong connections, $-A_{ij}\ge\theta\sqrt{A_{ii}A_{jj}}$ ($\theta = 0.1$). A
connected component of $G$ is \emph{anchored} if one of its rows is strongly coupled to the Dirichlet boundary,
$\sum_j A_{ij}\ge\theta A_{ii}$ (interior rows of a diffusion matrix sum to zero). The indicator of every unanchored component
with Rayleigh quotient $z^\top Az/z^\top Dz<\tau$ ($\tau = 0.05$) is a column of $Z$, and before the cycle
\begin{equation}\label{eq:deflation}
  x_0 = Z(Z^\top A Z)^{-1}Z^\top r .
\end{equation}
If no vector is detected, $x_0 = 0$. The detector is a heuristic for scalar diffusion: its strength criterion presumes
non-positive off-diagonal couplings, which holds for $P_1$ isotropic diffusion on non-obtuse triangulations but is not guaranteed
on a general unstructured mesh, and the anchoring test uses the zero row sums of interior rows. On the barrier test set of the
development data seed it finds $74$ of $75$ geometric islands with precision $75/78$; a small Rayleigh quotient certifies that a
returned vector has low energy, not that every important low-energy direction was found. The thresholds were fixed on the
development seed (a sensitivity check is in \appref{app:sensitivity}); for tensor coefficients we report how many vectors it
returns, and for elasticity it is not used.

The cycle is applied to the deflated residual $b = r - Ax_0$ on the finest level and its parts are combined energy-optimally
together with the near-kernel vectors,
\begin{equation}\label{eq:projection}
  W = [\,x,\ x_c,\ x_p,\ Z\,], \qquad c = x_0 + W y, \qquad W^\top A W\, y = W^\top (r - A x_0),
\end{equation}
with $y = (W^\top AW)^{+}W^\top(r - Ax_0)$ if the columns are dependent (the corrected span is unique). The small Gram system is
formed in double precision after Jacobi scaling, with a ridge and an energy-acceptance test described in \appref{app:impl}.
Training minimises $\ell(c) + \ell(x_0 + x + x_c + x_p)$ with $\ell$ the mean of the relative $L^2$ and energy errors; the second
term keeps each part well scaled, and training through the projection alone stalls on the channel problems. Algorithm~\ref{alg:pass}
summarises a pass, and Figure~\ref{fig:schematic} shows how the prescribed algebraic operations and the two learned smoothers fit
together.
\begin{algorithm}[tbp]
\caption{One OA-VCB pass, ideal construction (exact small solves; the regularised implementation is in \appref{app:impl}).
\emph{Set-up} (once per operator, no learned parameters): hierarchy and coarse factorisation (lines 1--4) and near-kernel vectors
$Z$ (line 5). \emph{Application} (per residual, lines 6--10). $\mathcal{S}^{\mathrm{pre}},\mathcal{S}^{\mathrm{post}}$ are the
learned smoothers \eqref{eq:smoother}, shared by all levels; if $Z$ is empty, $x_0 = 0$.}\label{alg:pass}
\begin{algorithmic}[1]
\Require fine operator $A_L$; geometric prolongations $P_k$; $m = 4$, $\omega = 2/3$
\For{$k = L, \dots, 1$} \State $\widehat\Phi_k \gets$ \eqref{eq:jacobi}; \quad $A_{k-1} \gets \widehat\Phi_k^{\!\top}A_k\widehat\Phi_k$; \quad smoother inputs $d_k$, $W^{(k)}$ of $A_k$ \EndFor
\State factor $A_0$ (dense, exact)
\State $Z \gets$ indicators of the unanchored strong-connection components of $A_L$ with Rayleigh quotient $<\tau$
\Statex
\State $x_0 \gets Z(Z^{\!\top}AZ)^{-1}Z^{\!\top}r$, \quad $b \gets r - Ax_0$ \Comment{residual $r = F - Au_c$}
\State $(x, x_c, x_p) \gets \textsc{Cycle}(L, b)$
\State $W \gets [\,x,\ x_c,\ x_p,\ Z\,]$; solve $W^{\!\top}AW\,y = W^{\!\top}b$ \Comment{Galerkin output \eqref{eq:projection}}
\State \Return $\hat u = u_c + x_0 + Wy$
\Function{Cycle}{$k$, $b$}
  \If{$k = 0$} \Return $A_0^{-1}b$ \EndIf
  \State $x \gets \mathcal{S}^{\mathrm{pre}}(b; A_k)$; \quad $x_c \gets \widehat\Phi_k\,\textsc{Cycle}\big(k-1,\ \widehat\Phi_k^{\!\top}(b - A_kx)\big)$
  \State $x_p \gets \mathcal{S}^{\mathrm{post}}\big(b - A_k(x + x_c); A_k\big)$; \Return $(x, x_c, x_p)$ if $k = L$, else $x + x_c + x_p$
\EndFunction
\end{algorithmic}
\end{algorithm}
\begin{figure}[tbp]\centering
\resizebox{\linewidth}{!}{\begin{tikzpicture}[x=1cm, y=1cm, font=\footnotesize, >=latex,
  box/.style={draw=black!70, rounded corners=2pt, align=center, inner sep=3pt},
  op/.style={box, fill=black!5},
  learn/.style={box, fill=blue!12, draw=blue!60!black, minimum width=9mm, minimum height=5.5mm},
  lvl/.style={font=\scriptsize, text=black!55}]
\node[box] (in) at (1.80,2.55) {$r = F - A\,u_c$};
\node[op, text width=40mm] (data) at (1.80,0.15) {\textbf{operator data}, from $A$\\[2pt]
  $\widehat\Phi_k$: $m$ Jacobi steps\\ towards $[I;\,-A_{FF}^{-1}A_{FC}]$\\[2pt]
  $A_{k-1}=\widehat\Phi_k^{\!\top}A_k\widehat\Phi_k$, \ $A_0^{-1}$\\[2pt]
  $Z$: near-kernel indicators\\[2pt] $D_k$, \ $W_{ij}=A_{ij}/A_{ii}$};
\node[op] (defl) at (7.70,2.55) {deflation: $x_0 = Z(Z^{\!\top}AZ)^{-1}Z^{\!\top}r$, \ $b = r - Ax_0$};
\node[learn] (s3) at (6.10,1.35) {$\mathcal{S}^{\mathrm{pre}}$};   \node[learn] (p3) at (9.30,1.35) {$\mathcal{S}^{\mathrm{post}}$};
\node[learn] (s2) at (6.65,0.45) {$\mathcal{S}^{\mathrm{pre}}$};  \node[learn] (p2) at (8.75,0.45) {$\mathcal{S}^{\mathrm{post}}$};
\node (s1) at (7.20,-0.35) {$\vdots$};                            \node (p1) at (8.20,-0.35) {$\vdots$};
\node[op, minimum width=9mm] (c0) at (7.70,-1.15) {$A_0^{-1}$};
\draw[->] (s3) -- node[left, lvl] {$\widehat\Phi_L^{\!\top}$} (s2);  \draw[->] (s2) -- (s1);  \draw[->] (s1) -- (c0);
\draw[->] (c0) -- (p1); \draw[->] (p1) -- (p2); \draw[->] (p2) -- node[right, lvl] {$\widehat\Phi_L$} (p3);
\node[lvl, anchor=east] at (5.50,1.35) {level $L$}; \node[lvl, anchor=east] at (5.50,0.45) {level $L{-}1$};
\node[lvl, anchor=east] at (5.50,-1.15) {level $0$};
\draw[->] (defl.south -| s3) -- (s3);
\node[lvl, align=center] at (7.70,-1.85) {learned smoothers, shared by all levels:\\ $\mathcal{S}(b;A_k) = \sigma\,\mathrm{dec}(s)$, eq.~\eqref{eq:smoother}};
\node[op, text width=40mm] (out) at (13.00,0.30) {\textbf{Galerkin output}\\[2pt]
  $W=[\,x,\ x_c,\ x_p,\ Z\,]$\\[1pt] $W^{\!\top}AW\,y = W^{\!\top}b$\\[1pt] $c = x_0 + Wy$\\[3pt]
  $\|e-c\|_A\le\|e-x_0\|_A\le\|e\|_A$};
\node[box] (res) at (13.00,2.55) {$\hat u = u_c + c$};
\draw[->] (in) -- (defl);
\draw[->, dashed, black!55] (data.east |- 0,-0.55) -- node[above, lvl] {operators $A_k$, $\widehat\Phi_k$, $Z$} (6.9,-0.55);
\draw[->] (p3.east) -- node[above, lvl] {$x,\ x_c,\ x_p$} (out.west |- p3.east);
\draw[->] (out) -- (res);
\end{tikzpicture}}
\caption{One OA-VCB pass, ideal construction. Grey boxes denote prescribed algebraic operations; blue boxes denote the learned
smoothers, one pre- and one post-smoother shared by all levels. The operator-data box is precomputed from each sample's
operator; deflation and the Galerkin output depend on the residual and on the cycle's parts. The output is the energy-optimal
combination of the three parts and the near-kernel vectors, which gives the safeguard of Proposition~\ref{prop:safeguard}.}
\label{fig:schematic}
\end{figure}

\subsection{Structural properties}\label{sec:properties}
\begin{proposition}[containment of fixed polynomial smoothers]\label{prop:containment}
Let $\nu\ge1$ stages and $C\ge2$ latent channels be given, and let $p(\mu) = \sum_{j=0}^{\nu-1}a_j\mu^j$ be any real polynomial of
degree at most $\nu-1$. There are encoder, step maps and decoder of both smoothers (explicit in \appref{app:proofs}) such that on
every level each smoother returns
\begin{equation}\label{eq:polysmoother}
\begin{aligned}
  x &= p(B_k)\,D_k^{-1}b, & B_k &= D_k^{-1}A_k,\\
  A_k^{-1}b - x &= R(B_k)\,A_k^{-1}b, & R(\mu) &= 1 - \mu\,p(\mu),
\end{aligned}
\end{equation}
with the same coefficients on all levels, and the network output \eqref{eq:deflation}--\eqref{eq:projection} equals the
classical deflated V$(\nu,\nu)$ cycle with these smoothers, transfers \eqref{eq:jacobi}, Galerkin levels, exact coarsest solve
and Galerkin projection onto $\operatorname{span}\{x,x_c,x_p,Z\}$. Equivalently, every real residual polynomial $R$ of degree at
most $\nu$ with $R(0) = 1$ is represented. In particular the cycle represents (a)~weighted Jacobi smoothing and
(b)~degree-$\nu$ Chebyshev smoothing on a fixed interval $[\lambda/q,\lambda]$, $\lambda>0$, $q>1$, whose residual polynomial
is $T_\nu\big((\theta-\mu)/\delta\big)/T_\nu(\theta/\delta)$ with $\theta = (\lambda+\lambda/q)/2$, $\delta = (\lambda-\lambda/q)/2$
\cite{saad2003}.
\end{proposition}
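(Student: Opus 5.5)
The plan is to build the weights explicitly so that the latent state runs a Horner (or power) recursion for $p(B_k)D_k^{-1}b$. Every nonlinear branch is switched off, and the global scaling $\sigma$ cancels by linearity.

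\textbf{Switching off the nonlinear parts.} The edge-message MLP and $\mathcal{M}_t$ each end in a linear output layer. Setting that layer's weights and bias to zero makes $\mathcal{M}_t\equiv0$ and $\overline m_i\equiv0$, whatever the inputs. The same is done in the encoder and decoder MLPs. What remains is the linear encoder map, the linear steps $s_i\leftarrow s_i+\mathcal{L}_t([s_i,\sum_{j\ne i}W_{ij}V_ts_j])$, and the linear decoder map. The feature $\log(d_i/\mathrm{median}\,d)$ gets weight zero. All biases are set to zero.

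\textbf{The Horner recursion.} Take the encoder to map $z_i/\sigma$ into channel 1 and put zero in the other channels. Write $W$ for the matrix with zero diagonal and entries $W_{ij}$, so that $\sum_{j\ne i}W_{ij}y_j=((B_k-I)y)_i$, because $B_k=D_k^{-1}A_k$ has unit diagonal. With this, a linear step can form $u\mapsto B_ku=u+Wu$ on any channel.

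Use channel 1 as a fixed copy $g=z/\sigma$ and channel 2 as an accumulator $h$, initialised to $a_{\nu-1}g$ by the first step. Then $T=\nu$ steps with
\[
h\leftarrow B_kh+a_{j}g,\qquad j=\nu-2,\dots,0,
\]
produce $h=p(B_k)g$. Each such step is realised by $V_t$ copying $h$ into a slot and $\mathcal{L}_t$ forming the increment $(h+Wh+a_jg)-h$. If there is one step more than needed, the extra step is the identity ($\mathcal{L}_t=0$). This uses $C=2$ channels; any further channels are left at zero.

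The decoder reads channel 2, so the output is $\sigma\,p(B_k)z/\sigma=p(B_k)D_k^{-1}b$. This also covers $\sigma=0$ if the convention there is output $0$, and it follows from positive homogeneity.

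No weight depends on the level, because $W$ and $d$ are inputs of the network and not parameters. The same formula therefore holds on every level with the same $a_j$. The identity $A_k^{-1}b-x=(I-B_kp(B_k))A_k^{-1}b$ follows from $p(B_k)D_k^{-1}=p(B_k)B_kA_k^{-1}$ and the fact that $B_k$ commutes with polynomials in itself.

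\textbf{Equality with the classical cycle.} With both smoothers equal to this polynomial map, \eqref{eq:cycle} and \eqref{eq:deflation}--\eqref{eq:projection} are by definition the deflated V$(\nu,\nu)$ cycle with Galerkin projection. Nothing else in the network is learned, so the equality is immediate.

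The correspondence $R\leftrightarrow p$ is a bijection: $R(0)=1$ means $1-R$ is divisible by $\mu$, so $p=(1-R)/\mu$ has degree at most $\nu-1$. The special cases then follow directly. Weighted Jacobi is $p\equiv\omega$ repeated, or in general the polynomial with $R=(1-\omega\mu)^\nu$. For Chebyshev, $R(0)=T_\nu(\theta/\delta)/T_\nu(\theta/\delta)=1$, and $T_\nu(\theta/\delta)\neq0$ since $\theta/\delta=(q+1)/(q-1)>1$.

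\textbf{Main obstacle.} The delicate step is checking the exact step architecture. The term $s_i$ enters both as an input to $\mathcal{L}_t$ and through the residual connection, and $V_t$ acts before the neighbour sum. The recursion above needs $\mathcal{L}_t$ to have access both to the diagonal part $s_i$ and to the neighbour part $\sum_{j\ne i}W_{ij}V_ts_j$, and it has both, since its input concatenates the two. I would write the $2C\times C$ block of $\mathcal{L}_t$ explicitly. Its entries in the accumulator row are $(1,a_j)$ acting on $[s_{i,2},s_{i,1}]$ minus the identity, so the residual connection is cancelled correctly, and $1$ on the $W V_t s$ slot. I would then verify the result on a two-node example.
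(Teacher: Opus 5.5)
Your construction is correct and is essentially the paper's own. It keeps a fixed copy of $z/\sigma$ in one channel and an accumulator that $V_t$ selects; $\mathcal{L}_t$ adds $a_{\nu-t}s_1+q_0$, so the residual connection supplies the unit diagonal of $B_k$; $\mathcal{M}_t=0$; and Horner's rule runs from a zero start. The only difference is that you obtain (a) and (b) from the bijection $R\leftrightarrow p=(1-R)/\mu$ by checking $R(0)=1$ and $T_\nu(\theta/\delta)\neq0$. The paper additionally realises real-rooted $R$ directly as Richardson steps and identifies the Chebyshev product with the $T_\nu$ ratio. Note also that $p\equiv\omega$ alone is a single Jacobi step; the $\nu$-step case is $R=(1-\omega\mu)^\nu$, as you go on to state.
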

Containment is a statement of representation. It requires the coefficients to be shared across levels; it bounds the correction
polynomial by degree $\nu-1$ (three at $\nu = 4$) and the residual polynomial by degree $\nu$; it does not assert that the
represented cycle contracts, that a fixed interval encloses the spectrum on every level, or that the learned smoother attains
the best member of the class; and the learned nonlinear smoother has no polynomial degree. \emph{Containment is not optimality:}
it does not contain adaptive Chebyshev or Krylov schedules, whose coefficients depend on the level or the input, and it does not
imply that a smoother outside the class cannot be represented. Numerically the network with the weights of the construction
reproduces the classical cycles to about $10^{-10}$ (\appref{app:impl}).
\begin{proposition}[positive homogeneity]\label{prop:homogeneity}
$\mathcal{N}(\lambda r; A) = \lambda\,\mathcal{N}(r; A)$ for every $\lambda > 0$.
\end{proposition}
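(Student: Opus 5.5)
The plan is to follow the pass in Algorithm~\ref{alg:pass} step by step. At each stage I will show that replacing $r$ by $\lambda r$ with $\lambda>0$ multiplies every residual-dependent quantity by $\lambda$, while every operator-dependent quantity stays the same. The set-up data depend on $A$ alone and are unchanged: the transfers $\widehat\Phi_k$ of \eqref{eq:jacobi}, the Galerkin operators $A_k$, the factorisation of $A_0$, the diagonals $d_k$, the weights $W_{ij}$ and the matrix $Z$. The deflation \eqref{eq:deflation} is linear in $r$. So $x_0$ and $b = r - Ax_0$ scale by $\lambda$.

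The central step is to show that each learned smoother is positively homogeneous, that is, $\mathcal{S}(\lambda b; A_k) = \lambda\,\mathcal{S}(b;A_k)$. With $z = b/d$, the quantity $\sigma = \|z\|_{\mathrm{rms}}$ scales by $\lambda$ because $\lambda>0$. The normalised feature $z_i/\sigma$ is therefore invariant, and the feature $\log(d_i/\operatorname{median} d)$ does not involve $b$. Every downstream quantity is then identical for $b$ and $\lambda b$: the encoder output, all $T$ message-passing steps \eqref{eq:smoother} (which use only $s$, $W$ and $\delta_{ij}$), and the decoder output $\mathrm{dec}(s)$. The output $\sigma\,\mathrm{dec}(s)$ thus scales by $\lambda$.

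The case $b=0$, where $\sigma=0$, needs a convention. I would note that the implementation returns $0$ there, for example by treating $z/\sigma$ as $0$ or by adding a guard. This case is consistent with homogeneity and is the main point needing care; I will state it explicitly. A second point to check is that any regularisation in the smoother, such as an $\epsilon$ added to $\sigma$, must be absent or scale-relative, since otherwise homogeneity is only approximate. I expect these two points to be where the claim is qualified.

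Next comes induction over levels in \textsc{Cycle}. At level $0$ the map is $A_0^{-1}b$, which is linear. Assume $\mathcal{C}_{k-1}$ is positively homogeneous. Then $x$ scales by $\lambda$. So does $\widehat\Phi_k^\top(b - A_kx)$, hence $e_c$ and $x_c$. The post-smoother input $b - A_k(x+x_c)$ also scales, and therefore so does $x_p$. This shows that $(x,x_c,x_p)$ are positively homogeneous on level $L$.

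Finally, I treat the Galerkin output \eqref{eq:projection}. The matrix $W$ becomes $W_\lambda = W\Lambda$ with $\Lambda = \operatorname{diag}(\lambda,\lambda,\lambda,1,\dots,1)$, because $Z$ does not scale. The span of $W_\lambda$ equals the span of $W$, and the right-hand side $r - Ax_0$ scales by $\lambda$. The $A$-orthogonal projection of $\lambda(r-Ax_0)$, through $A^{-1}$, onto a fixed subspace is $\lambda$ times the projection of $r - Ax_0$; this holds even for dependent columns, since the corrected span is unique. Hence $Wy$ scales by $\lambda$, and so does $c = x_0 + Wy$.

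The implemented version uses Jacobi scaling, a ridge and an acceptance test (\appref{app:impl}). For it I would check that the Jacobi scaling normalises the columns, which removes the factor $\Lambda$. I would also check that the ridge is relative and that the acceptance test compares energy ratios, both of which are scale-invariant. The proposition holds exactly for the ideal construction and, under these conditions, for the implementation.
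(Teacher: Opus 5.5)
Your proposal is correct and follows the paper's own argument, in more detail. The smoother is positively homogeneous because $z/\sigma$ and $\log d$ are unchanged by the scaling while the output carries the factor $\sigma$ (with $\mathcal{S}(0)=0$ by definition); the residual updates, transfers, coarsest solve and deflation are linear; and since $x,x_c,x_p$ scale by $\lambda>0$ while $Z$ does not, $\operatorname{span}W$ is unchanged and the projection of $\lambda(r-Ax_0)$ onto it is $\lambda$ times that of $r-Ax_0$. Your closing remarks on the ridged, Jacobi-scaled implementation go beyond the paper's proof, which treats the construction as defined and supports the finite-precision implementation only through the numerical checks reported in the appendix.
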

Homogeneity concerns the scale of the data only; it says nothing about right-hand sides of a different shape
(Section~\ref{sec:loads}) and does not cover $\lambda<0$, since the smoothers are not odd in $r$. In double precision
$|\mathcal{N}(2.5r)-2.5\mathcal{N}(r)|/|2.5\mathcal{N}(r)|\le 7\times10^{-13}$, and $7\times10^{-7}$ in the single-precision
implementation for residual scales from $10^{-20}$ to $10^{8}$ (\appref{app:impl}).
\begin{proposition}[energy-norm safeguard]\label{prop:safeguard}
For any network parameters, in exact arithmetic and with $e = A^{-1}r$,
\[
  \|e - c\|_A \;=\; \min_{y}\|e - x_0 - Wy\|_A \;\le\; \|e - x_0\|_A \;=\; \min_{w\in\operatorname{span}Z}\|e - w\|_A \;\le\; \|e\|_A ,
\]
so that $\|u - (u_c + c)\|_A \le \|u - u_c\|_A$: the output is never worse in energy than the coarse solution it corrects, nor than
deflation alone.
\end{proposition}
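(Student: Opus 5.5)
The plan is to read both equalities in the chain as best-approximation statements in the $A$-inner product, and to obtain the inequalities from nestedness of the search spaces. Throughout, the vectors $x,x_c,x_p$ and the columns of $Z$ are treated as fixed once the pass has produced them. The network parameters only determine which vectors these are, so nothing about them is needed beyond the fact that they are vectors in $\mathbb{R}^n$. This is why the statement holds for any parameters.

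\emph{Step 1: the deflation equality.} Note that $Z^\top r = Z^\top A e$. Then $x_0 = Z(Z^\top AZ)^{-1}Z^\top Ae$ is the $A$-orthogonal projection of $e$ onto $\operatorname{span}Z$, so $Z^\top A(e-x_0)=0$. Pythagoras then gives, for $w = x_0 + Zv$,
\[
\|e-w\|_A^2=\|e-x_0\|_A^2+\|Zv\|_A^2 ,
\]
so $\|e-x_0\|_A=\min_{w\in\operatorname{span}Z}\|e-w\|_A$. The choice $w=0$ gives $\le\|e\|_A$. If $Z$ is empty, $x_0=0$ and this part is an equality.

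\emph{Step 2: the Galerkin equality.} With $b=r-Ax_0=A(e-x_0)$, the system $W^\top AW\,y=W^\top b$ is exactly the normal equation for minimising $\|(e-x_0)-Wy\|_A$ over $y$. Any solution, including the pseudoinverse one when the columns are dependent, satisfies $W^\top A\big((e-x_0)-Wy\big)=0$. Hence $x_0+Wy$ is the $A$-projection of $e$ onto the affine set $x_0+\operatorname{span}W$, and the minimiser $Wy$ is unique even when $y$ is not. This gives $\|e-c\|_A=\min_y\|e-x_0-Wy\|_A$. Taking $y=0$ bounds it by $\|e-x_0\|_A$.

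\emph{Step 3: translate to $u$.} Since $u-(u_c+c)=e-c$ and $u-u_c=e$, the final claim follows from the chain.

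The only point needing care, and the place I expect any real difficulty, is the rank-deficient case in Step 2. There I would argue through the column space of $W^\top A W$: $W^\top b$ lies in the range of $W^\top$, and $\operatorname{range}(W^\top A W)=\operatorname{range}(W^\top)$ because $A$ is SPD. So the pseudoinverse returns a genuine solution of the normal equations, and the projection characterisation applies. Everything else is routine, and the statement is in exact arithmetic, so the regularised implementation of \appref{app:impl} is not at issue.
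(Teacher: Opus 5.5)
Your proposal is correct and follows the paper's own proof. Both read the Galerkin system as the normal equations of $\min_y\|e-x_0-Wy\|_A$ and the deflation step as the $A$-orthogonal projection of $e$ onto $\operatorname{span}Z$, with $y=0$ and $w=0$ as admissible competitors; your check that $\operatorname{range}(W^\top AW)=\operatorname{range}(W^\top)$ makes the pseudoinverse case explicit, which the paper only notes in passing.
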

The argument is the Ritz/Galerkin step of deflated and flexible Krylov methods \cite{frank2001,rudikov2024}; it bounds the damage
relative to $u_c$ and says nothing about the gain. It is a statement about exact arithmetic. The implementation adds a ridge of
$10^{-10}$ and an energy-acceptance test, which preserve energy non-increase up to rounding but not the minimisation equality
(\appref{app:impl}); every trained model is in addition evaluated for the number of test samples on which it is worse than $u_c$.
The operator data cost is measured in Section~\ref{sec:cost}: because the $m$ Jacobi steps act on Galerkin operators that widen
from level to level, the operator complexity $\sum_k\operatorname{nnz}(A_k)/\operatorname{nnz}(A_L)$ is $4.5$ for three levels
and $8.6$ for four.

\section{Controls and experimental design}\label{sec:controls}

\subsection{Problems and data}\label{sec:problems}
All experiments use the perforated unit square (three circular holes, homogeneous Dirichlet conditions on the holes and on the
outer boundary), meshed by an unstructured coarse triangulation that is refined uniformly. The main configuration has $L=3$
refinements ($3{,}486$ unknowns) and a level-$k_0=2$ Galerkin coarse solve $u_c$. Two isotropic coefficient classes test
different difficulties. \emph{Channels}: four meandering conductive bands of width $1/16$ crossing the domain, $a = 10^4$ in the
bands and $1$ elsewhere, load $f=1$; the near-kernel difficulty is absent and the cycle must resolve contrast along the bands.
\emph{Barriers}: the same geometry with $a = 10^{-4}$ in the bands, which insulate and can enclose floating conductive islands;
the source $f=1$ acts only where $a\ge 1/2$. Figure~\ref{fig:setup} shows the meshes and typical fields.
\begin{figure}[tbp]\centering\includegraphics[width=0.92\linewidth]{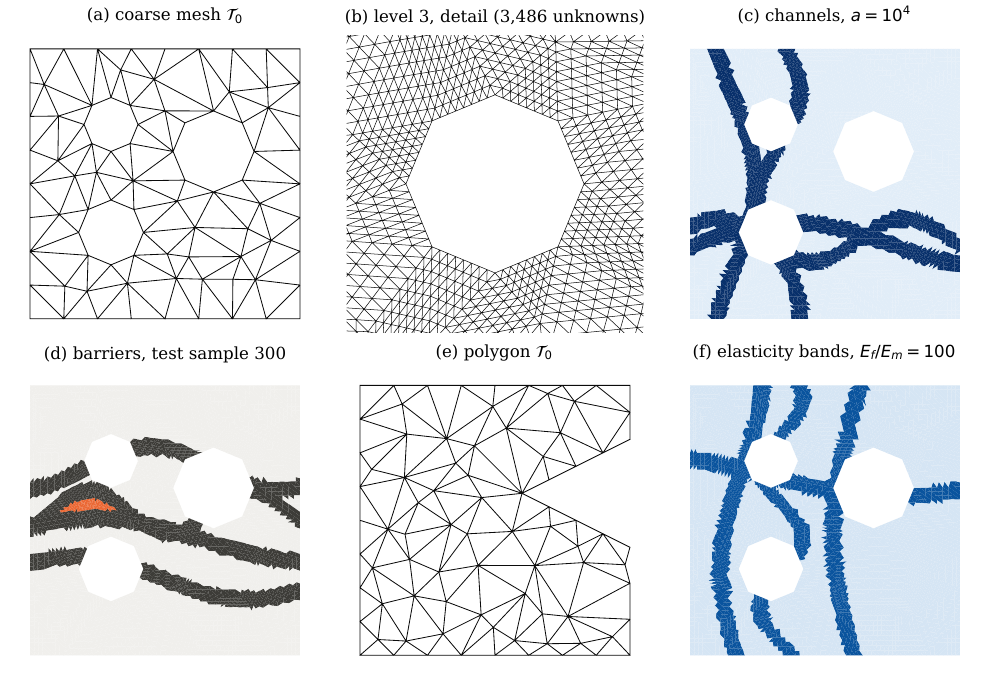}
\caption{Meshes and coefficient samples. (a) Unstructured coarse mesh $\mathcal{T}_0$ of the perforated square; (b) detail
of its third uniform refinement; (c) channels, conductive bands $a = 10^4$ (dark) in a unit background; (d) barriers,
insulating bands $a = 10^{-4}$ (dark), with a floating conductive island (orange, geometric label); (e) coarse mesh of the
polygonal domain; (f) elasticity, Young's modulus $100$ times larger in the bands. Field (c) is training sample 0 of held-out
data seed 1; field (d) is test sample 300 of the same seed, chosen as the first test sample whose floating island has more than
30 elements; (f) is training sample 0 of the elasticity data set.}\label{fig:setup}
\end{figure}
Each coefficient set consists of independent draws; one network is trained per data set. Every learned model in the main
comparisons has $20{,}672$ parameters (OA-VCB), is trained with AdamW and a one-cycle schedule (peak learning rate
$2\times10^{-3}$), batch size $8$ and $300$ epochs on the mean of the relative $\ell^2$ and energy errors, and is not tuned per
coefficient class. The quantity of interest is the algebraic error with respect to the discrete solution $u$ of $Au=F$, mainly in
the energy norm; discretisation error is not part of any number.

\subsection{Evidence sets and their status}\label{sec:evidence}
Two sets of evidence are used, and they are never pooled.
\emph{Original study (13 October).} For each data seed $s$, $400$ coefficient fields are drawn from the random seed $1000+s$;
samples $0$--$299$ train and $300$--$399$ test. Data seed $0$ was the development seed; the results are on the held-out seeds
$1$--$3$ (100 test operators each) unless stated otherwise. Each model is trained separately on each data seed with
initialisation seed $0$, and OA-VCB was retrained with initialisation seeds $1$ and $2$.
\emph{Stage 1.} Before any confirmation operator was generated, a written protocol fixed the comparators, the primary
comparisons, the descriptive bands for the primary comparison and the statistics; later changes were recorded as dated amendments
before the numbers they affect were computed (chronology and hashes in \appref{app:stage1}). Five kinds of operator set are used.
\emph{T}: the original training operators, used only to fit fixed polynomials and to choose a Chebyshev interval ratio.
\emph{Development set D}: the original test operators of data seed 1, inspected before Stage 1 and used only for implementation
checks and exploratory analyses, never for confirmation. \emph{Confirmation set C}: 100 operators per family generated after the
protocol was fixed; the pre-declared comparisons were evaluated here, once. \emph{Set LC}: the first 50 channel operators of C with
four off-training load families. \emph{Reserve set}: 100 further operators per family, generated after a second amendment had fixed
the endpoints R1--R4 (the channel bracket, the barrier bracket, the comparison of Jacobi-PCG(14) with SGS-PCG(4), and the local $K_5$
criterion) and specified the repaired timings and the operation ledger. The Stage 1 results reported here evaluate stored
data-seed-1 checkpoints from the original study, including three fixed-load OA-VCB initialisations, residual-linear controls, and
the separately trained random-load model used in the load challenge. No neural network was trained or retrained for these
evaluations; polynomial comparator coefficients were fitted on T. Three initialisations evaluated on the same 100 operators are not
300 independent operators, and all bootstrap intervals are pointwise, conditional on the trained models, and describe variability
over test operators only. One protocol step was not carried out as specified: a run-time check of the frozen scale file was not
executed. The file's SHA-256 matches its pre-confirmation hash, but that comparison was made retrospectively and does not prove the
file's contents at run time (\appref{app:stage1}).

\subsection{Metrics}\label{sec:metrics}
Comparisons are paired per operator: for a comparator $X$ and a model $Y$,
\[
  G(X\!\to\!Y) = \exp\Big(\tfrac1n\textstyle\sum_i \log\big(e^{X}_i/e^{Y}_i\big)\Big),\qquad
  W = \tfrac1n\#\{i: e^{Y}_i<e^{X}_i\},
\]
the geometric-mean ratio of the energy errors and the fraction of operators on which $Y$ is more accurate, so $G>1$ favours $Y$.
Intervals are $95\%$ percentile bootstrap intervals over operators ($5{,}000$ resamples for the original study, $10{,}000$ for
Stage 1). Arithmetic mean errors are reported alongside in the original tables; where the two summaries disagree we say so. We
write $G$ for the original study and, in Stage 1 tables, $G(X\!\to\!\text{OA-VCB})$ with the comparator named.

\subsection{Comparators}\label{sec:comparators}
Every comparator below uses the same transfers, Galerkin levels, coarse solution $u_c$, deflation vectors, exact coarsest solve and
Galerkin output as OA-VCB, with only the smoothers replaced, unless it is called a whole-system solver.
\emph{Chebyshev reference.} The deflated V$(4,4)$ cycle with degree-four Chebyshev smoothers on the interval $[\lambda_k/10,\lambda_k]$,
with $\lambda_k = 3$ on the finest scalar level and otherwise $1.05$ times a $30$-step power-iteration estimate of the largest
eigenvalue of $D_k^{-1}A_k$ (a heuristic estimate, not a certified bound). Because its intervals depend on level and operator,
Proposition~\ref{prop:containment} does not establish that it is contained in the architecture; numerically it is within a few per
cent of the contained fixed-interval Chebyshev member (paired ratio $0.975$--$1.056$, \appref{app:impl}), and the contained
Jacobi member is $1.35$--$1.82$ times less accurate. This is the reference of the original study.
\emph{Jacobi-PCG($s$) and SGS-PCG($s$) smoothing calls.} In every pre- and post-smoothing call the smoother is replaced by $s$
steps of conjugate gradients on the call's input, started from zero, with the Jacobi (respectively symmetric Gauss--Seidel,
natural ordering, one symmetric sweep per application) preconditioner. $s=4$ matches the four learned stages. These are controls
\emph{inside} the cycle; they are not the whole-system PCG solvers below.
\emph{Fitted fixed polynomial (POLY-S).} A degree-three correction polynomial in $D^{-1}A$, with one coefficient set for the
pre-smoothers and one for the post-smoothers shared by all levels (eight numbers), fitted on the training operators to minimise
the evaluation metric (which favours the control). It is a member of the contained class of Proposition~\ref{prop:containment}.
\emph{Per-instance polynomial candidate.} For each operator, separate degree-three polynomials for the six smoothing calls of the
cycle are fitted with the exact solution from five starting points by L-BFGS. Its feasible set contains, in exact arithmetic, the
trajectory of every smoother whose output lies in the Krylov space $K_4$ of its own call's input. The fitted error is an upper
bound on the best member of that class, not a certified optimum: a candidate that beats OA-VCB refutes superiority over the class
on that operator, whereas a candidate that loses proves nothing about the class.
\emph{Local Krylov-space diagnostic.} For the input $b$ of one fine-level smoothing call, let
$K_d=\operatorname{span}\{D^{-1}b,\,(D^{-1}A)D^{-1}b,\dots,(D^{-1}A)^{d-1}D^{-1}b\}$. The ratio $\rho_{K_d}=E_{\mathrm{NN}}/E_{K_d}$ divides
the energy error of the network smoother's output by that of the exact energy-norm minimiser over $K_d$, computed from a
rank-aware $A$-orthonormal basis (on the reserve set the $d$-step Jacobi-PCG error equals it to a relative $3\times10^{-15}$). Three facts bound its
interpretation. The input is \emph{fixed}: a different pre-smoother changes the later inputs of the coarse correction and the
post-smoother, so a local optimum is not a bound for complete cycles. $K_4$ is the class of the contained correction polynomials
(degree three), and $K_5$ is a \emph{degree control} with matrix-propagation degree matching four message-passing layers; it is not an
equal-information or equal-cost control, because the network normalises by a global scale and uses nonlinear features. And leaving a
scalar polynomial space does not show that nonlinearity is necessary, since a spatially varying residual-linear operator can also
leave it.
\emph{Whole-system solvers.} Two-grid PCG (weighted Jacobi$(1,1)$ smoothing and an exact level-$(L-1)$ Galerkin coarse solve),
smoothed-aggregation AMG-preconditioned CG (PyAMG), and a sparse direct solve (SuperLU), all started from $u_c$ except the direct
solve. For the iterative solvers the iteration count that first reaches OA-VCB's error on each operator is selected offline, with
the reference solution, outside the timing (an oracle stopping rule, not a deployable one).

\subsection{Cost measurement and disclosure}\label{sec:costmeasure}
Costs are reported on one CPU in two measurements that are never combined: the original platform (an Intel Xeon at $2.1$\,GHz,
$2$ threads, L3 and L4) and Stage 1 (an Intel Xeon at $2.10$\,GHz, two threads, five repetitions; repaired timings after review,
with an approximate operation ledger; \appref{app:stage1}). A third machine (an Intel Xeon at $2.80$\,GHz) was used for the
original larger-budget measurement of Figure~\ref{fig:avt}.
Python implementations are compared; the learned cycle runs in single precision inside double-precision deflation and output,
whereas the classical cycles run in double precision, and no precision ablation was run.
Software for this study (data generation, models, training, evaluation, classical comparators, numerical checks and the scripts
that aggregate stored records into tables and figures) was written and run with the assistance of generative AI systems under the
author's direction (see the declaration). No reported number, table or data plot was produced by these tools directly: all are
computed by scripts from stored per-sample records, the principal summaries were recomputed by independent scripts, and the
Stage 1 statistics were independently recomputed from the per-operator errors without the study's own statistics code
(agreement to $9\times10^{-16}$).
Learned operator baselines of similar size, adapted to this setting, were also trained in the original study; they are not part
of the comparisons of this paper and are kept in the released archive.

\section{Results: local gains, full-cycle accuracy and cost}\label{sec:results}

\subsection{Full-cycle accuracy against stronger one-pass controls}\label{sec:full}\label{sec:accuracy}
\begin{table}[tbp]\centering\footnotesize\setlength{\tabcolsep}{3.5pt}
\caption{One-pass energy error of the classical comparator divided by that of OA-VCB (paired geometric-mean $G$, pointwise 95\% bootstrap interval over operators; $G>1$ favours OA-VCB). \emph{Panel A} (Stage 1 set C): 100 operators generated for the confirmation, one fixed-load network per initialisation seed; every comparator is the same cycle with only the smoothers replaced. ``Jacobi-PCG(4)\textquoteright\textquoteright{} is a four-step Jacobi-preconditioned conjugate-gradient smoother \emph{inside every smoothing call}, not the whole-system two-grid PCG solver of Section~\ref{sec:cost}. ``Polynomial candidate\textquoteright\textquoteright{} is the best per-instance fit of separate degree-three polynomials for the six smoothing calls, obtained with the exact solution and five starts; its error is an upper bound on the error of the best class member, not a certified optimum. ``SGS-PCG(4)\textquoteright\textquoteright{} uses the symmetric Gauss--Seidel preconditioner. \emph{Panel B} (original test sets, 13 Oct run, initialisation 0): 100 test operators per data seed, Chebyshev reference only. The two panels use different operators and checkpoints and are never pooled.}\label{tab:s1main}
\begin{tabular}{>{\raggedright\arraybackslash}p{4.6cm}lll}\toprule
\multicolumn{4}{l}{\emph{A. Stage 1 set C, channels (contrast $10^4$); columns: OA-VCB initialisation seed}} \\
comparator & 0 & 1 & 2 \\\midrule
Jacobi-PCG(4) smoothing calls (primary, pre-declared) & 5.70 [5.25, 6.13] & 5.94 [5.56, 6.32] & 6.12 [5.71, 6.54] \\
Chebyshev $\nu=4$ (reference of the original study) & 9.18 [8.43, 9.94] & 9.58 [8.92, 10.23] & 9.87 [9.16, 10.59] \\
SGS-PCG(4) smoothing calls & 1.16 [1.08, 1.23] & 1.21 [1.15, 1.27] & 1.24 [1.18, 1.31] \\
fitted fixed polynomial, degree 3 (contained class) & 5.14 [4.79, 5.51] & 5.37 [4.96, 5.81] & 5.53 [5.13, 5.95] \\
polynomial candidate (per instance, upper bound) & 4.14 [3.82, 4.45] & -- & -- \\\midrule
\multicolumn{4}{l}{\emph{A. Stage 1 set C, barriers (contrast $10^{-4}$); columns: OA-VCB initialisation seed}} \\
comparator & 0 & 1 & 2 \\\midrule
Jacobi-PCG(4) smoothing calls (primary, pre-declared) & 1.45 [1.33, 1.59] & 1.42 [1.28, 1.58] & 1.38 [1.24, 1.54] \\
Chebyshev $\nu=4$ (reference of the original study) & 2.00 [1.82, 2.19] & 1.96 [1.76, 2.17] & 1.90 [1.70, 2.11] \\
SGS-PCG(4) smoothing calls & 0.58 [0.50, 0.68] & 0.57 [0.48, 0.67] & 0.55 [0.47, 0.65] \\
fitted fixed polynomial, degree 3 (contained class) & 1.76 [1.60, 1.93] & 1.73 [1.55, 1.91] & 1.67 [1.51, 1.85] \\
polynomial candidate (per instance, upper bound) & 0.75 [0.68, 0.83] & -- & -- \\\midrule
\multicolumn{4}{l}{\emph{B. Original test sets (13 Oct), Chebyshev $\nu=4$ only; columns: data seed}} \\
family & 1 & 2 & 3 \\\midrule
channels & 10.23 [9.75, 10.71] & 9.76 [9.16, 10.31] & 9.68 [9.12, 10.24] \\
barriers & 2.13 [1.90, 2.38] & 1.93 [1.70, 2.18] & 1.93 [1.70, 2.19] \\
\bottomrule\end{tabular}\end{table}

The pre-declared primary comparison of Stage 1 replaces the four-stage smoothers by four steps of Jacobi-preconditioned conjugate
gradients inside each smoothing call (Section~\ref{sec:comparators}; Table~\ref{tab:s1main}, panel A). On the confirmation set,
OA-VCB is $5.70$, $5.94$ and $6.12$ times more accurate than this control on channels for the three initialisations, with
$W = 0.99$--$1.00$, and $1.45$, $1.42$ and $1.38$ times on barriers with $W = 0.76$--$0.82$. The frozen labels read ``not
accounted for by the tested four-step Jacobi-PCG control'' on channels and ``partly accounted for / inconclusive'' on barriers
(the lower interval limits, $1.24$--$1.33$, lie below the declared threshold of two); they concern one algorithm, not every Krylov
method. The control is itself $1.61$ [$1.58$, $1.64$] (channels) and $1.37$ [$1.32$, $1.43$] (barriers) times more accurate than the
Chebyshev reference, so the Chebyshev ratios on the same operators are larger ($9.18$--$9.87$ and $1.90$--$2.00$); they reproduce
the original study (panel B, different operators and checkpoints, never pooled with panel A; $9.68$--$10.74$ and $1.80$--$2.13$
over three training initialisations, \appref{app:repeat}). A fitted fixed polynomial of the contained class is overtaken by
$5.14$--$5.53$ on channels and $1.67$--$1.76$ on barriers.

The picture on barriers is weaker and mixed. Against Gauss--Seidel-preconditioned CG with four steps per call the ordering
reverses: $G = 0.55$--$0.58$, with OA-VCB more accurate on only about a quarter of the operators, while on channels it keeps a
small margin ($1.16$--$1.24$). The best per-instance polynomial candidate found, which uses the exact solution and is an upper
bound on the error of the best class member (Section~\ref{sec:comparators}), is $4.14$ [$3.82$, $4.45$] times less accurate than OA-VCB on
channels, but on barriers it is \emph{more} accurate: $G = 0.75$ [$0.68$, $0.83$], with OA-VCB better on $34\%$ of the operators
and, on island operators, $0.57$ [$0.47$, $0.69$] (Table~\ref{tab:s1sub}). A feasible candidate that wins refutes superiority over
the polynomial class on those instances; a candidate that loses, as on channels, does not establish superiority of OA-VCB over
the class optimum. The full ladder of
comparators is in \appref{app:stage1}.

\subsection{A local diagnostic}\label{sec:local}
\begin{table}[tbp]\centering\footnotesize\setlength{\tabcolsep}{4pt}
\caption{Local Krylov-space diagnostic on 100 fresh channel operators (Stage 1 reserve set). For the input $b$ of one fine-level smoothing call, $\rho_{K_d}=E_{\mathrm{NN}}/E_{K_d}$ is the energy error of the network smoother divided by the exact energy-norm minimum over the Jacobi-preconditioned Krylov space $K_d=K_d(D^{-1}A,D^{-1}b)=\operatorname{span}\{D^{-1}b,(D^{-1}A)D^{-1}b,\dots,(D^{-1}A)^{d-1}D^{-1}b\}$ of the \emph{same} fixed input (rank-aware $A$-orthonormal basis). Values below one favour the network. Pre- and post-smoother inputs of the same complete cycle are listed separately; $\rho_{K_5}$ is a degree control (matrix-propagation degree four plus one), not an equal-information or equal-cost control, because the network normalises by a global residual scale and uses nonlinear features. Counts are numbers of the 100 inputs. The diagnostic concerns one fixed input and does not describe complete cycles, whose later inputs differ.}\label{tab:s1local}
\begin{tabular}{llllllll}\toprule
 & & $K_4$ & \multicolumn{2}{c}{$K_5$} & \multicolumn{3}{c}{$K_8$} \\ \cmidrule(lr){3-3}\cmidrule(lr){4-5}\cmidrule(lr){6-8}
init & input & median & median (10--90\%) & $\rho\le0.99$ & median & $\rho\le0.99$ & $\rho<1$ \\\midrule
0 & pre & 0.610 & 0.709 (0.662--0.768) & 100 & 1.025 & 19 & 28 \\
0 & post & 0.488 & 0.607 (0.518--0.767) & 97 & 1.069 & 23 & 27 \\
1 & pre & 0.601 & 0.695 (0.649--0.763) & 100 & 1.011 & 36 & 40 \\
1 & post & 0.488 & 0.614 (0.528--0.779) & 99 & 1.061 & 21 & 22 \\
2 & pre & 0.595 & 0.689 (0.640--0.755) & 100 & 1.002 & 43 & 49 \\
2 & post & 0.471 & 0.595 (0.475--0.835) & 98 & 1.049 & 35 & 37 \\
\bottomrule\end{tabular}\end{table}

The comparators above change the later inputs of the cycle. To ask a cleaner question we fix the input of one smoothing call and
compare the network's output with the exact energy minimum over a Krylov space of that input (Section~\ref{sec:comparators}).
On the confirmation set the fine-level smoother improves on the $K_4$ minimum for all $100$ channel inputs at both positions
(median $\rho_{K_4}=0.614$ before and $0.491$ after the coarse correction). On $100$ fresh reserve channel operators
(Table~\ref{tab:s1local}), the improvement persists against $K_5$: $\rho_{K_5}\le0.99$ on $97$--$100$ of $100$ inputs for every
combination of initialisation and position, with median $\rho_{K_5}$ of $0.689$--$0.709$ before and $0.595$--$0.614$ after the
coarse correction, so the frozen criterion ($\ge90$ inputs) is met in all six cases. The orthonormality defect of the Krylov
bases is below $10^{-14}$, and the Jacobi-PCG iterate reproduces the minimiser to $3\times10^{-15}$. Against $K_8$ the median ratio
reverses ($1.002$--$1.025$ before, $1.049$--$1.069$ after), the network is at least $1\%$ better on only $19$--$43$ of $100$ inputs
and better at all on $22$--$49$: the diagnostic does not say that the network dominates a larger local space, nor that $K_8$
dominates it on every input. On barriers the local gain is position dependent (confirmation set: $\rho_{K_4}\le0.99$ on $44$ of $100$
inputs before and $94$ of $100$ after the coarse correction).

This is a rank-checked numerical evaluation of an energy-minimising space for a fixed input. It is neither a theorem about all
operators nor a statement about complete cycles. It does not show that the learned smoother communicates better per stage, since
$K_5$ is a degree control only, and it does not show that nonlinearity is necessary: a spatially varying residual-linear operator
can also leave the scalar polynomial space. It is a local diagnostic of what the output contains that a fixed-input polynomial
space of the same degree does not.

\subsection{Larger classical budgets in the complete cycle}\label{sec:budget}
\begin{table}[tbp]\centering\footnotesize\setlength{\tabcolsep}{3.5pt}
\caption{Larger classical smoothing budgets in the complete cycle (Stage 1 reserve set; 100 fresh operators per family; generated after the amendment that fixed these endpoints). $G$ is the paired geometric-mean ratio of the energy error of the cycle with Jacobi-PCG($s$) smoothing calls ($s$ steps in each pre- and post-smoothing call, all other ingredients unchanged) to that of OA-VCB, with the pointwise 95\% bootstrap interval; values below one mean that the classical cycle is more accurate. The bracket concerns paired geometric-mean error over operators, not each operator and not an exact crossover step count. Rows within a family are different trained networks evaluated on the same 100 operators.}\label{tab:s1brackets}
\begin{tabular}{p{4.2cm}lll}\toprule
\multicolumn{4}{l}{\emph{channels; columns: OA-VCB initialisation seed}} \\
comparator & 0 & 1 & 2 \\\midrule
Jacobi-PCG(4) & 5.96 [5.41, 6.41] & 6.20 [5.80, 6.57] & 6.43 [6.05, 6.78] \\
Jacobi-PCG(5) & 3.65 [3.32, 3.92] & 3.80 [3.56, 4.02] & 3.94 [3.72, 4.15] \\
Jacobi-PCG(8) & 1.40 [1.28, 1.50] & 1.45 [1.37, 1.53] & 1.51 [1.43, 1.58] \\
Jacobi-PCG(14) & 0.37 [0.34, 0.40] & 0.39 [0.36, 0.41] & 0.40 [0.38, 0.43] \\
SGS-PCG(4) & 1.16 [1.06, 1.24] & 1.21 [1.14, 1.27] & 1.25 [1.19, 1.31] \\
bracket confirmed between & PCG(8)--PCG(14) & PCG(8)--PCG(14) & PCG(8)--PCG(14) \\
\midrule
\multicolumn{4}{l}{\emph{barriers; columns: OA-VCB initialisation seed}} \\
comparator & 0 & 1 & 2 \\\midrule
Jacobi-PCG(4) & 1.54 [1.39, 1.70] & 1.60 [1.45, 1.77] & 1.55 [1.39, 1.72] \\
Jacobi-PCG(5) & 1.21 [1.08, 1.34] & 1.26 [1.13, 1.40] & 1.21 [1.09, 1.36] \\
Jacobi-PCG(8) & 0.72 [0.62, 0.84] & 0.76 [0.66, 0.87] & 0.73 [0.63, 0.84] \\
Jacobi-PCG(14) & 0.35 [0.29, 0.43] & 0.37 [0.30, 0.45] & 0.36 [0.29, 0.44] \\
SGS-PCG(4) & 0.64 [0.55, 0.75] & 0.67 [0.57, 0.78] & 0.64 [0.55, 0.75] \\
bracket confirmed between & PCG(5)--PCG(8) & PCG(5)--PCG(8) & PCG(5)--PCG(8) \\
\bottomrule\end{tabular}\end{table}

\begin{figure}[tbp]\centering\includegraphics[width=\linewidth]{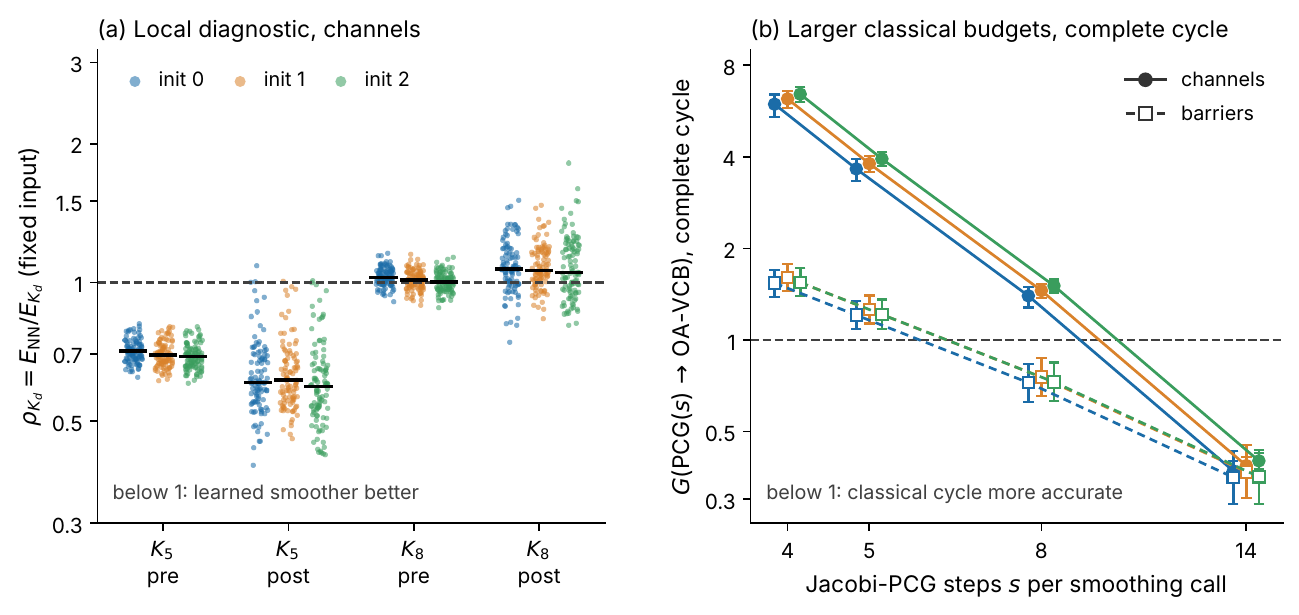}
\caption{Local and full-cycle evidence side by side; the two panels have different inputs and different meanings. (a)~Local
diagnostic on $100$ fresh reserve channel operators: each dot is the input of one fine-level smoothing call, with the ratio of the
network smoother's energy error to the exact energy minimum over $K_5$ or $K_8$ of the \emph{same} input (below $1$: the network
is better for that input; bars: medians; colours: the three training initialisations). (b)~Complete cycles on the same reserve
operators: paired geometric-mean ratio of the energy error of the cycle with $s$ Jacobi-PCG steps per smoothing call to that of
OA-VCB, with pointwise 95\% bootstrap intervals (below $1$: the classical cycle is more accurate; whose later inputs differ from
OA-VCB's). Channels: filled markers, solid lines; barriers: open markers, dashed lines.}\label{fig:keyfig}
\end{figure}
Giving the classical cycle more work closes the gap quickly (Table~\ref{tab:s1brackets}, Figure~\ref{fig:keyfig}b). On the reserve
operators, Jacobi-PCG with $5$ steps per call is still $3.65$--$3.94$ times less accurate than OA-VCB on channels and $1.21$--$1.26$ on
barriers; with $8$ steps the ratio is $1.40$--$1.51$ on channels (every lower limit above $1.279$) and $0.72$--$0.76$ on
barriers; with $14$ steps the classical cycle is the more accurate one on both: $0.37$--$0.40$ on channels (every upper limit
below $0.426$) and $0.35$--$0.37$ on barriers. Bracketing was confirmed for all three initialisations, between $8$ and $14$ steps on
channels (primary endpoint) and between $5$ and $8$ on barriers (secondary endpoint). The brackets concern paired geometric-mean
error over operators; they are not statements about each operator, and they do not locate an exact crossover step count. The
$14$-step classical cycle is $2.5$--$2.7$ times more accurate than OA-VCB on channels and $2.7$--$2.8$ on barriers, and four steps
of Gauss--Seidel-preconditioned CG is overtaken by the $14$-step Jacobi-PCG cycle ($0.32$ and $0.55$). The local result and the
bracket coexist without contradiction: a smoother that is locally better than $K_5$ for its own input does not make the complete
cycle better than a classical cycle with a larger budget.

The original study showed the same budget dependence against the Chebyshev reference on its own test set (data seed $1$, CPU of
a second platform; \appref{app:budget}, Figure~\ref{fig:avt}). The margin on channels falls from $10.2$ at degree four to $4.2$ at
degree eight and $1.8$ at degree sixteen. On barriers sixteen-step Chebyshev reverses the paired advantage ($G = 0.82$, the
classical pass being more accurate on $55$ of $100$ operators), although OA-VCB keeps the smaller arithmetic mean error ($0.0436$
against $0.0587$). Repeating the degree-four reference pass is more effective still: twice it is within a factor $1.30$ of OA-VCB
on channels and $3.5$ times more accurate on barriers, three times it is more accurate by the paired metric on both
($G = 0.19$ and $0.05$), at $77$--$88$\,ms per sample against $268$--$273$\,ms for OA-VCB on that machine. These Chebyshev
comparisons exceed the residual-polynomial degree covered by Proposition~\ref{prop:containment} for four stages and no learned
model with the larger budget was trained; they test a larger classical smoothing budget, not a trained alternative. The two
sets of budget results differ in dataset, checkpoints, smoother family and platform and are not combined.
\begin{figure}[tbp]\centering\includegraphics[width=\linewidth]{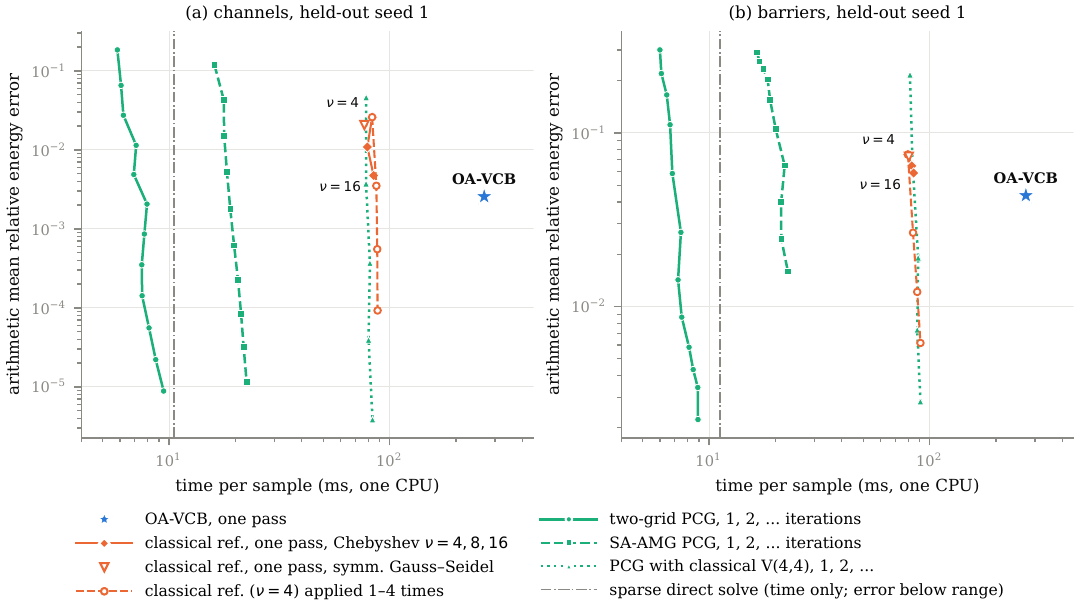}
\caption{Original-study measurement on one CPU (held-out data seed 1, all methods in one run): arithmetic mean relative energy
error against mean time per sample from $u_c$, set-up included. Classical reference: one pass with Chebyshev smoothing of degree
$\nu=4$, $8$, $16$ or symmetric Gauss--Seidel, and the degree-four reference applied one to four times. PCG curves: one point per
iteration. The vertical line marks only the time of the sparse direct solve, whose error is at solver accuracy, below the plotted
range. The paired $G$ of Table~\ref{tab:avt} is a different summary of the same runs.}\label{fig:avt}
\end{figure}

\subsection{Complete cost}\label{sec:cost}
\begin{table}[tbp]\centering\footnotesize\setlength{\tabcolsep}{4pt}
\caption{Complete cost of the Stage 1 comparison (one CPU, one run, Python implementations; channels unless stated; the platform differs from that of Table~\ref{tab:cost}, and the two are not combined). \emph{Panel A}: approximate MAC-equivalent arithmetic of one pass of the complete cycle under the operation-count convention of \appref{app:stage1}, excluding the operator-adapted hierarchy and $u_c$ (common to all rows) and method-specific set-up such as the Chebyshev eigenvalue bounds; it also omits the small dense solves of the Galerkin wrapper and does not enumerate every Gram symmetrisation, scaling and masking operation, and it counts the unpadded wrapper although the executed network wrapper carries padded near-kernel columns. The dense-layer and cycle sparse-product counts agree with independent counters, which does not certify every executed operation. The ledger is neither a time nor a communication volume. \emph{Panel B}: median over 50 operators (ms) of separately timed components; ``to OA-VCB error\textquoteright{}\textquoteright{} selects, offline and per operator, the first iteration whose energy error reaches the one-pass OA-VCB error (an oracle stopping rule, not a deployable one); ``estimated end-to-end\textquoteright{}\textquoteright{} is the median over operators of the per-operator sum of that time and the operator\textquoteright{}s own (median-of-five) $u_c$ construction time. Timings: one Intel Xeon processor at 2.10\,GHz, two threads, Python~3.13, PyTorch~2.14 (CPU), five repetitions. The direct solve is never charged $u_c$.}\label{tab:s1cost}
\begin{tabular}{>{\raggedright\arraybackslash}p{6.9cm}rrr}\toprule
\multicolumn{4}{l}{\emph{A. Counted arithmetic per pass, set-up excluded (channels)}} \\ method & MMAC & SpMV-eq. & OA-VCB/method \\\midrule
Jacobi-PCG(4) smoothing calls & 1.8 & 75 & 406 \\
Jacobi-PCG(8) & 2.8 & 120 & 253 \\
Jacobi-PCG(14) & 4.4 & 188 & 161 \\
SGS-PCG(4) & 3.9 & 167 & 182 \\
Chebyshev $\nu=4$ (pass only) & 1.7 & 73 & 414 \\
OA-VCB (four learned stages) & 712.8 & 30330 & 1 \\
\midrule\multicolumn{4}{l}{\emph{B. Time per operator, ms (median over 50)}} \\ quantity & channels & barriers & \\\midrule
OA-VCB one pass, batch 1, given $u_c$ & 284 & 282 & \\
classical one pass, any tested smoother, given $u_c$ & 66--72 & 66--71 & \\
two-grid PCG to OA-VCB error, given $u_c$ & 4.3 & 4.0 & \\
two-grid PCG to OA-VCB error, estimated end-to-end & 6.9 & 6.6 & \\
SA-AMG PCG to OA-VCB error, given $u_c$ & 14.2 & 14.5 & \\
SA-AMG PCG to OA-VCB error, estimated end-to-end & 16.8 & 17.1 & \\
sparse direct solve (to solver accuracy) & 10.1 & 10.0 & \\
\bottomrule\end{tabular}\end{table}

The accuracy gains do not buy a computational advantage on the measured implementations. Table~\ref{tab:s1cost} gives the Stage 1
account, from the repaired timings and an approximate operation ledger. Under the stated operation-count convention, which counts
one pass with set-up excluded and omits some small wrapper operations (\appref{app:stage1}), one OA-VCB pass counts roughly $713$
million MAC-equivalents ($30{,}330$ fine-SpMV-equivalents) against $1.76$, $2.82$ and $4.41$ million for the Jacobi-PCG cycles with
$4$, $8$ and $14$ steps, ratios of approximately $406$, $253$ and $161$; the $14$-step cycle is the more accurate. The ledger is
neither a time nor a complete end-to-end count, and OA-VCB additionally evaluates $13.1$ million activations and $18$ nonlinear
global reductions per pass. In time, one OA-VCB pass takes a median $284$ (channels) and $282$\,ms (barriers) given $u_c$; a separate
per-component profile attributes about $87$ and $86$\,ms of the pass to operator data. A classical one-pass cycle with any smoother
takes $66$--$72$\,ms, nearly all of it the operator-adapted hierarchy that the learned cycle shares. Two-grid PCG reaches OA-VCB's
per-operator energy error in a median $4.3$ ($4.0$) ms given $u_c$; the median over operators of the per-operator sum with the time
of constructing $u_c$ is $6.9$ ($6.6$)\,ms. Smoothed-aggregation AMG-preconditioned CG takes $14.2$ ($14.5$) and $16.8$
($17.1$)\,ms, and the sparse direct solve, which reaches solver accuracy, $10.1$ ($10.0$)\,ms. The iterative solvers' iteration counts are chosen offline with the true error, so these are oracle
stopping times; a deployable residual-based stopping rule would add convergence-check cost.

The original study measured the same conclusion on another platform (Table~\ref{tab:cost}, \appref{app:budget}). At L3 two-grid PCG,
SA-AMG PCG and a sparse direct solve took $5.7$, $13.5$ and $8.2$\,ms against $165$\,ms for OA-VCB, factors of $29$, $12$ and $20$;
at L4 the factors grew to $98$, $58$ and $50$. From L3 to L4 ($4.1$ times the unknowns) OA-VCB's time grew by a factor $15$ and
two-grid PCG's by $4.3$. The network's CPU inference alone was $17$ (L3) and $63$ (L4) times the full two-grid PCG time including its
set-up, so no number of repeated solves amortises training on the tested configurations. As context only, on the T4 GPU used for
training, inference took $13$\,ms per sample at L3 in batches of eight; optimised implementations, batched GPU operator data and
reuse of an operator for many right-hand sides were not measured. The two measurements are not combined into one time curve.

\section{Limits of the gains, design ablations and scope}\label{sec:limits}

\subsection{Floating islands}\label{sec:islands}
\begin{table}[tbp]\centering\footnotesize\setlength{\tabcolsep}{4pt}
\caption{Barriers split by floating islands (geometric label, conductive component touching no Dirichlet node). Mean energy error OA-VCB / classical reference, paired $G$ and $W$ within each group, and OA-VCB's ratio of island to non-island mean error.}\label{tab:islands}
\begin{adjustbox}{max width=\linewidth}
\begin{tabular}{rrlllll}\toprule
seed & islands & with island & $G$, $W$ & without island & $G$, $W$ & ratio \\\midrule
1 & 39 & 0.100 / 0.167 & 1.86, 0.87 & 0.0075 / 0.0165 & 2.33, 0.95 & 13.4 \\
2 & 48 & 0.139 / 0.162 & 1.34, 0.67 & 0.0058 / 0.0140 & 2.70, 0.98 & 24.1 \\
3 & 40 & 0.143 / 0.188 & 1.56, 0.80 & 0.0056 / 0.0135 & 2.23, 0.95 & 25.6 \\
\bottomrule\end{tabular}
\end{adjustbox}
\end{table}

Floating islands are present in $39$--$48\%$ of the barrier test operators of the original study, and island samples have larger
mean energy errors for every method tested (Table~\ref{tab:islands}). Without islands OA-VCB's paired margin over the reference
is $G = 2.2$--$2.7$ with mean energy error $5.6\times10^{-3}$--$7.5\times10^{-3}$; on island samples the mean error is $13$--$26$ times
larger, the margin falls to $G = 1.3$--$1.9$, and OA-VCB loses to the reference on $13$--$33\%$ of them. Figure~\ref{fig:errormaps}
shows where the remaining error sits.
\begin{figure}[tbp]\centering\includegraphics[width=\linewidth]{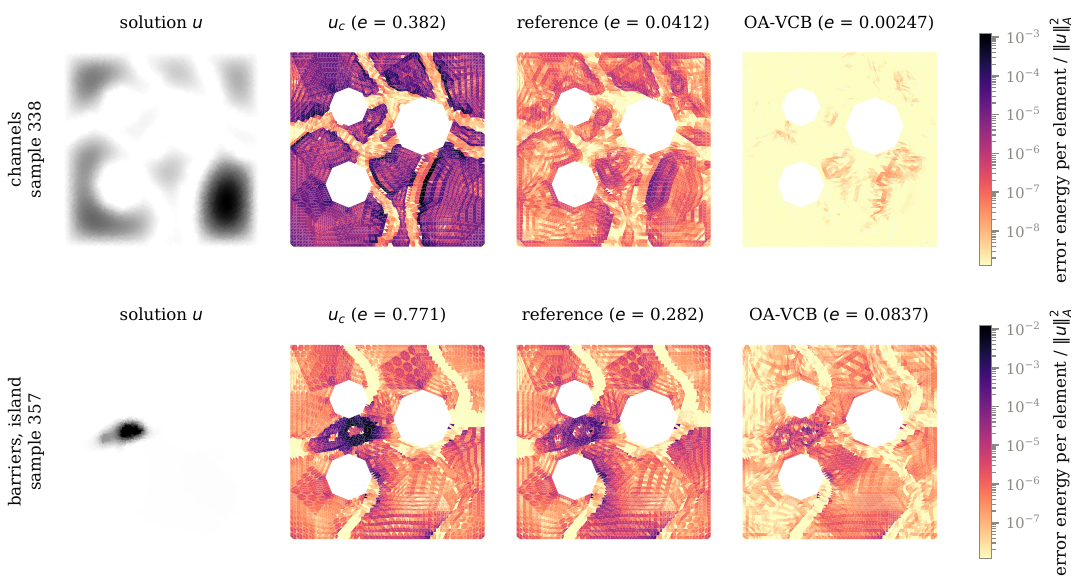}
\caption{Solution and element-wise energy of the error, $a_e|\nabla(u-\hat u)|^2|T_e|/\|u\|_A^2$, for $\hat u = u_c$, the
Chebyshev reference and OA-VCB, on the median channel test sample (by OA-VCB error) and the median island sample of the
barriers (held-out seed 1). Shared logarithmic colour scale per row. On the island sample the remaining OA-VCB error is
concentrated in and around the island.}\label{fig:errormaps}
\end{figure}
On Stage 1's confirmation set the island split is again the dominant feature (Table~\ref{tab:s1sub}, $42$ island and $52$ strict
non-island operators). Against Jacobi-PCG(4) the margin is similar in the two groups ($1.45$ and $1.49$). The other comparators
separate them: the K2-cycle is as accurate as OA-VCB on islands ($0.95$ [$0.80$, $1.13$]) and $1.99$ times less accurate off them;
Gauss--Seidel-preconditioned CG is slightly less accurate on islands ($1.10$) but $2.7$ times more accurate off them ($0.37$); and
the per-instance polynomial candidate is more accurate than OA-VCB on islands ($0.57$ [$0.47$, $0.69$]) and comparable off them
($0.96$). On barriers the learned cycle is thus not uniformly ahead, and its apparent margin depends on which classical control is
chosen and on whether islands are present.

An island indicator is constant on the island, whereas the low-energy mode that it approximates also varies across the
surrounding barrier, so the safeguard does not by itself remove island error. We asked, with a protocol fixed in writing before any
number was computed (\appref{app:harmonic}), how much of OA-VCB's remaining error lies in $A$-harmonic extensions of the
indicators into a patch of two graph rings, added as a sequential $A$-orthogonal correction of the frozen prediction, decided on
$100$ fresh coefficient fields per checkpoint. On the $89$ detector-positive operators the correction reduces the mean energy error
from $0.1219$ to $0.1043$ ($R = 14.4\%$, paired 95\% interval $9.8$--$19.5\%$) and outperforms a patch-indicator control of equal
support by a factor $1.13$, but it does not meet the prespecified support threshold: the outcome is ``little support from the tested
enrichment''. It is a post-processing test on frozen predictions, not an assessment of retraining with enriched deflation.

\subsection{Dependence on the load}\label{sec:loads}\label{sec:rhs}
\begin{table}[tbp]\centering\footnotesize\setlength{\tabcolsep}{4pt}
\caption{Loads other than the training load, against stronger classical smoothers (Stage 1 set LC: 50 fresh channel operators, four off-training load families---$f=-1$, smooth loads of both signs, a localised source and a random-load draw---200 load--operator pairs). Paired geometric-mean ratio of the comparator cycle\textquoteright{}s energy error to the network\textquoteright{}s, with a 95\% bootstrap interval that resamples operators; values below one favour the classical comparator. The fixed-load network was trained with $f=1$ only (initialisation 0); the random-load network was trained on a load distribution. The Chebyshev entry for the random-load network is not comparable with the larger gains of Section~\ref{sec:rhs}, which use other loads and the original test operators.}\label{tab:s1loads}
\begin{tabular}{lllll}\toprule
 & \multicolumn{2}{c}{fixed-load network} & \multicolumn{2}{c}{random-load network} \\ \cmidrule(lr){2-3}\cmidrule(lr){4-5}
comparator & $G$ [95\% CI] & $W$ & $G$ [95\% CI] & $W$ \\\midrule
Chebyshev $\nu=4$ & 0.37 [0.32, 0.43] & 0.18 & 4.56 [4.12, 5.03] & 0.95 \\
Jacobi-PCG(4) & 0.23 [0.19, 0.26] & 0.13 & 2.79 [2.52, 3.08] & 0.94 \\
SGS-PCG(4) & 0.04 [0.04, 0.05] & 0.02 & 0.54 [0.49, 0.59] & 0.09 \\
\bottomrule\end{tabular}\end{table}

The fixed-load networks approximate the map $a\mapsto u(a;f)$ for the training load $f=1$ and their gain does not transfer to other
loads. Positive scaling is exact by Proposition~\ref{prop:homogeneity}, but on Stage 1's four off-training load families
(Table~\ref{tab:s1loads}, $50$ channel operators, $200$ load--operator pairs) the fixed-load network is less accurate than the
Chebyshev reference ($G = 0.37$ [$0.32$, $0.43$]), than the Jacobi-PCG(4) control ($0.23$ [$0.19$, $0.26$]) and than SGS-PCG(4)
($0.04$). Per family and initialisation (\appref{app:stage1}) the fixed-load networks are less accurate than the one-pass
Jacobi-PCG(4) cycle in every case except one, initialisation~2 on the random-load draw ($0.97$ [$0.73$, $1.30$], not
distinguishable from equality). The original study showed the same pattern against the Chebyshev reference
(\appref{app:loads}): with $f=-1$ the fixed-load OA-VCB is less accurate on all $300$ operators ($G=0.21$), and for smooth loads of
both signs, localised sources and random loads the reference is more accurate ($G = 0.20$--$0.60$). The energy safeguard holds
throughout: it prevents damage relative to $u_c$, but it does not promise accuracy.

Training on a load distribution changes the picture. In the original study the random-load networks are $6.2$--$6.9$ times more
accurate than the Chebyshev reference on $f=\pm1$, smooth loads and held-out random loads, at the price of a $1.44$-fold loss on
$f=1$; this six-fold gain is specific to the Chebyshev reference and was not repeated against stronger controls on that set. On Stage
1's set LC the random-load network is $2.79$ [$2.52$, $3.08$] times more accurate than Jacobi-PCG(4) and $4.56$ [$4.12$, $5.03$]
times more accurate than Chebyshev, but $0.54$ [$0.49$, $0.59$] times as accurate as SGS-PCG(4), so a stronger classical
smoother erases the gain on loads even for the distributionally trained network. For localised sources the gain over Jacobi-PCG(4)
is only $1.38$ [$1.06$, $1.76$]. Two operator sets and different load definitions are involved and the figures are not combined.
These tests support broader forcing coverage after distributional training; they do not establish accuracy for arbitrary loads.

\subsection{Design ablations}\label{sec:ablation}\label{sec:transfers}
The following ablations, all from the original study against the Chebyshev reference on its held-out seed~1 (one initialisation
per variant unless stated; \appref{app:ablations}), locate parts of the gain under our training protocol. They are not causal
decompositions: the variants are separately trained networks, and the interaction of components is not additive. The
operator-adapted transfers matter: retraining with geometric prolongation ($m=0$) makes the learned cycle $1.9$ times less
accurate on channels and $4.1$ times on barriers, and it reduces the margin over the correspondingly changed reference from $10.2$
to $6.7$ (channels) and from $2.13$ to $1.48$ (barriers). Retraining with $m=8$ keeps the margin ($11.17$ and $2.01$,
\appref{app:sensitivity}). On barriers, indicator deflation improves the paired accuracy by $G = 2.14$ with plain output and $1.85$
with Galerkin output, and without it no variant beats the reference by more than $15\%$. Learned residual-linear controls, with the
same operator-dependent structure but a smoother map that is linear in its input, still gain on channels ($G = 2.59$ for OA-VCL,
$2.72$ for OA-VCBL), and the nonlinear smoothers add a further $3.36$ and $3.76$; the product $2.72\times3.76 = 10.2$ is algebra
from chaining paired ratios, not an attribution. On barriers the residual-linear models gain little or lose ($1.13$ and $0.70$).
The ablations are consistent with the local diagnostic in that a polynomial-space description does not capture all of the learned
cycle's gain, but they do not identify why.

\subsection{Scope checks}\label{sec:scope}
Separately trained variations of the main configuration are summarised in \appref{app:settings}. Without an externally supplied
coarse solution ($k_0=0$) OA-VCB keeps $G = 10.8$--$11.4$ with $W=1$; a finer hierarchy ($L=4$, $14{,}398$ unknowns) gives
$G=9.72$, a polygonal domain $9.39$, and training and testing on the random-load distribution $6.05$--$6.34$, all against the
Chebyshev reference. The L4-trained model applied unchanged to a finer mesh keeps $G = 8.12$ [$7.50$, $8.67$] at L5 ($58{,}494$
unknowns) and $4.18$ [$3.43$, $4.88$] at L6 ($235{,}774$ unknowns, $21$ of $24$ planned samples, the evaluation having been stopped by a
session limit); neither shows mesh independence, and a model trained at L3 does not keep its margin at L4 ($G=1.95$ on $20$
samples). \emph{Anisotropy.} As a supplementary scope check, we trained and evaluated the construction on the anisotropic
tensor-coefficient family using the original protocol (Table~\ref{tab:aniso}, \appref{app:settings}). The paired geometric-mean energy-error ratios against
the degree-four Chebyshev cycle were $3.50$, $3.51$ and $3.58$ for data seeds $0$, $1$ and $2$, respectively; seed~$0$ was the
development configuration. These comparisons show accuracy gains over this reference in the tested tensor setting. Stronger
classical smoothing budgets, local Krylov-space controls and matched-accuracy solver costs were not evaluated for this family, so
the result does not establish the corresponding advantages or local diagnostic results there. \emph{Elasticity} (plane strain, one
data set and one training run; preliminary): without deflation or tuning OA-VCB is more accurate than its Chebyshev reference on
every test sample ($G=2.32$); tensor diffusion is not a validation of elasticity (\appref{app:settings}). All scope checks use the
reference of the original study; none was re-run against the stronger Stage 1 controls.

\section{Discussion}\label{sec:discussion}

\paragraph{What the controls show.} The classical comparators receive the same transfers, Galerkin levels, coarse solution,
deflation vectors and output projection as OA-VCB, so the margins of Section~\ref{sec:results} measure what four learned update
stages per smoother add to one pass through this cycle, compared with the specific classical smoothers we tested. Part of the
original ten-fold margin was a statement about the Chebyshev reference: a four-step Jacobi-PCG smoothing call already closes a
factor of about $1.6$ on channels, and against SGS-PCG(4) the barrier ordering reverses. The fixed-input diagnostic says something
narrower and cleaner than any complete-cycle ratio: for the same input, the network's output has a smaller energy-norm error than
the exact minimum over the degree-matched space $K_5$ on nearly all channel inputs, whereas the larger space $K_8$ is about as good
or better. It does not say that the learned smoother is optimal, that no other classical smoother matches it, or that its gain
depends on nonlinearity.

\paragraph{Why local gains do not make a solver.} A smoother that beats a degree-matched space for its own input is a local object.
In a complete cycle, later inputs change, and a larger classical smoothing budget is cheap on the operator-adapted hierarchy: eight
Jacobi-PCG steps per call already overtake OA-VCB on barriers and fourteen on both families, with substantially less counted
arithmetic (Tables~\ref{tab:s1brackets} and~\ref{tab:s1cost}). Against solvers that converge to the same accuracy the measured gap
is one to two orders of magnitude in time and grew with the mesh, and the network inference alone exceeded a complete two-grid PCG
solve including its set-up. Reuse of an operator across many right-hand sides, batched GPU execution and optimised implementations
were not measured; they would have to be compared with equally optimised classical code, and a correction that is nonlinear in the
load would need a suitable outer method inside an iteration. We draw no solver conclusion.

\paragraph{Loads, islands and complexity.} The network is positively homogeneous but not linear in the residual. Trained with one
load, it learns corrections for the residuals that load produces, and its advantage disappears on other loads; distributional
training restores a margin that SGS-PCG(4) still removes (Section~\ref{sec:loads}). Floating islands retain substantial error
after indicator deflation, and the tested sequential harmonic enrichment removes only a modest part of it. The operator complexity
of the hierarchy, $4.5$ for three levels and $8.6$ for four, is a measured limitation for deeper hierarchies or three dimensions;
truncated prolongations \cite{vanek1996} or richer coarse spaces \cite{galvis2010,spillane2014geneo,chung2018cem,ma2022,rudikov2025lsino}
were not tested.

\paragraph{What the study offers.} The need for strong baselines in learned PDE solvers is established \cite{mcgreivy2024}; what this
study adds is specific evidence in one explicit construction. For readers developing learned smoothers it provides a cycle in which
the classical comparators share every non-learned ingredient, a fixed-input Krylov-space diagnostic that separates local
approximation from full-cycle accuracy, the classical budgets at which the paired accuracy crosses, adverse cases (barriers with
islands, off-training loads) reported separately rather than averaged away, and the per-operator records and scripts needed to
re-examine every comparison. In this study the hierarchy and the learning also interacted: with geometric transfers the learned
cycle was $1.9$ and $4.1$ times less accurate and its margin over the classical reference smaller.

\paragraph{Scope and threats to validity.} All experiments are two-dimensional with $P_1$ elements, with contrast $10^4$ for the
isotropic families, $3{,}486$ unknowns in the main configuration, and one family of perforated domains plus one polygon. The learned
cycle runs in single precision and the classical cycles in double precision, without a precision ablation. Stage 1 reuses stored
checkpoints, three initialisations on the same operators, so intervals describe variability over test operators only. The
frozen-scale gate of Stage 1 was not executed at run time (Section~\ref{sec:evidence}). Timings are Python on one CPU with oracle
stopping rules. Anisotropy and elasticity were not evaluated against the stronger controls.

\section{Conclusion}\label{sec:conclusion}
We asked what learned smoothing adds to one pass of a multigrid cycle whose transfers, coarse operators, coarsest solve and
deflation vectors are computed from each operator. Against four-step Jacobi-PCG smoothing in the same cycle the learned pass is about
six times more accurate in the energy norm on conductive channels and about $1.4$ times on insulating barriers, and for a fixed input
the learned smoother improves on the energy minimum over the degree-matched Krylov space on nearly all channel inputs. These gains
do not carry over to larger classical budgets: a fourteen-step Jacobi-PCG cycle is the more accurate on both families, and the
implementation is slower than converged classical solvers at matched accuracy. The advantage also depends on the training load,
and floating islands keep substantial error. Learned smoothing is therefore a measurable source of local and one-pass accuracy
within a prescribed multilevel cycle, not a practical solver in the tested implementation. Whether such local gains can lower the
total cost of reaching a tolerance remains open.

\section*{CRediT authorship contribution statement}
\textbf{Kumar Kaushik Ranjan:} Conceptualization, Methodology, Software, Validation, Formal analysis, Investigation, Data
curation, Writing -- original draft, Writing -- review \& editing, Visualization.

\section*{Declaration of competing interest}
The author declares no known competing financial interests or personal relationships that could have appeared to
influence the work reported in this paper.

\section*{Funding}
This research did not receive any specific grant from funding agencies in the public, commercial, or not-for-profit sectors.

\section*{Declaration of generative AI and AI-assisted technologies in the manuscript preparation process}
During the preparation of this work the author used Claude (Anthropic) in order to write and run software for the numerical
experiments, to assemble the tables and figures from stored results and to draft and edit the manuscript text, and ChatGPT
(OpenAI) in order to review the study protocol, the results and the manuscript critically. The use of
these tools in the research itself is described in Section~\ref{sec:costmeasure}. After using these tools, the author reviewed and
edited the content as needed and takes full responsibility for the content of the published article.

\section*{Code and data availability}
The source code (data generation, models, training and evaluation, identified by the hash \texttt{f8653612eb84} stored with every
result), the Stage 1 protocol, amendments, reports and per-operator records, the trained checkpoints and the scripts that regenerate
the tables and figures from the records are publicly available at \url{https://github.com/kkr12334/OFW_NO} and archived on Zenodo as
version 1.0.1, \href{https://doi.org/10.5281/zenodo.23477955}{doi:10.5281/zenodo.23477955} \cite{ranjan2026release} (concept DOI for all
versions: 10.5281/zenodo.23477954). Code is released under the MIT licence and data and records under CC BY 4.0. The archived
release (854 files) carries a SHA-256 list of every file (release digest
\texttt{4370b4e4ec64989c\allowbreak 8f26b75096267ec5\allowbreak 7307f55f3a902c19\allowbreak 0109ea53258d5670}). The final versions of the Stage 1 table and figure
generators used for this manuscript are included with its source files.

\clearpage
\appendix
\section{Contained classical cycles, implementation details and numerical checks}\label{app:impl}
\begin{table}[tbp]\centering\footnotesize\setlength{\tabcolsep}{4pt}
\caption{Classical cycles contained in the architecture (Proposition~\ref{prop:containment}, same hierarchy, coarse solution, deflation and Galerkin output; $\nu=4$): fixed-interval Chebyshev smoothing ($\lambda=3$, same weights on every level) and weighted Jacobi ($\omega=2/3$), together with the adaptive Chebyshev reference used in the other tables. The contained cycles are evaluated in double precision with the unregularised Galerkin solves of the mathematical construction; the regularisation and finite-precision safeguards of the deployed network are described in Section~\ref{sec:properties}. (a) Mean energy errors and paired $G$ of each contained cycle over the reference. (b) Paired $G$ of each classical cycle over OA-VCB ([95\% CI] for the contained cycles) and OA-VCB's win fraction $W$. Elasticity: no deflation; one data set.}\label{tab:contained}
{\small (a) Mean energy errors}\par\smallskip
\begin{adjustbox}{max width=\linewidth}
\begin{tabular}{lllllll}\toprule
set & OA-VCB & reference & fixed Cheb. & Jacobi & fixed Cheb./ref.\ $G$ & Jacobi/ref.\ $G$ \\\midrule
channels, seed 1 & 2.55e-3 & 2.59e-2 & 2.58e-2 & 4.03e-2 & 0.995 & 1.57 \\
channels, seed 2 & 2.79e-3 & 2.50e-2 & 2.49e-2 & 3.85e-2 & 0.995 & 1.55 \\
channels, seed 3 & 2.92e-3 & 2.63e-2 & 2.62e-2 & 4.05e-2 & 0.995 & 1.54 \\
barriers, seed 1 & 4.36e-2 & 7.51e-2 & 7.62e-2 & 8.63e-2 & 1.052 & 1.51 \\
barriers, seed 2 & 6.96e-2 & 8.51e-2 & 8.61e-2 & 9.64e-2 & 1.040 & 1.45 \\
barriers, seed 3 & 6.07e-2 & 8.33e-2 & 8.50e-2 & 9.50e-2 & 1.056 & 1.50 \\
$k_0=0$, seed 1 & 7.62e-3 & 8.52e-2 & 8.84e-2 & 1.15e-1 & 1.039 & 1.35 \\
$k_0=0$, seed 2 & 7.58e-3 & 8.04e-2 & 8.36e-2 & 1.09e-1 & 1.041 & 1.36 \\
$k_0=0$, seed 3 & 8.13e-3 & 8.55e-2 & 8.85e-2 & 1.15e-1 & 1.036 & 1.35 \\
L4, $k_0=3$, seed 1 & 2.21e-3 & 2.01e-2 & 2.02e-2 & 3.08e-2 & 1.008 & 1.54 \\
elasticity (prelim.) & 2.47e-2 & 5.50e-2 & 5.38e-2 & 9.84e-2 & 0.975 & 1.82 \\
\bottomrule\end{tabular}
\end{adjustbox}
\par\medskip
{\small (b) Gains of OA-VCB}\par\smallskip
\begin{adjustbox}{max width=\linewidth}
\begin{tabular}{llllll}\toprule
set & ref./OA-VCB $G$ & fixed Cheb./OA-VCB $G$ & $W$ & Jacobi/OA-VCB $G$ & $W$ \\\midrule
channels, seed 1 & 10.23 & 10.19 [9.69, 10.66] & 1.00 & 16.02 [15.27, 16.73] & 1.00 \\
channels, seed 2 & 9.76 & 9.71 [9.12, 10.27] & 1.00 & 15.11 [14.16, 16.00] & 1.00 \\
channels, seed 3 & 9.68 & 9.63 [9.07, 10.19] & 1.00 & 14.95 [14.08, 15.79] & 1.00 \\
barriers, seed 1 & 2.13 & 2.24 [1.99, 2.51] & 0.93 & 3.22 [2.83, 3.64] & 0.94 \\
barriers, seed 2 & 1.93 & 2.01 [1.76, 2.28] & 0.83 & 2.80 [2.39, 3.27] & 0.87 \\
barriers, seed 3 & 1.93 & 2.04 [1.78, 2.32] & 0.90 & 2.90 [2.53, 3.33] & 0.93 \\
$k_0=0$, seed 1 & 11.39 & 11.83 [11.38, 12.30] & 1.00 & 15.34 [14.74, 15.93] & 1.00 \\
$k_0=0$, seed 2 & 10.91 & 11.36 [10.78, 11.91] & 1.00 & 14.80 [14.05, 15.52] & 1.00 \\
$k_0=0$, seed 3 & 10.82 & 11.21 [10.77, 11.67] & 1.00 & 14.57 [14.01, 15.17] & 1.00 \\
L4, $k_0=3$, seed 1 & 9.72 & 9.80 [9.13, 10.42] & 1.00 & 14.98 [13.93, 15.92] & 1.00 \\
elasticity (prelim.) & 2.32 & 2.26 [2.15, 2.37] & 1.00 & 4.22 [3.93, 4.53] & 1.00 \\
\bottomrule\end{tabular}
\end{adjustbox}
\end{table}

\paragraph{Contained classical cycles.} Table~\ref{tab:contained} evaluates the two contained members of
Proposition~\ref{prop:containment} on the main test sets. The contained fixed-interval Chebyshev member is as accurate as the
adaptive reference to within a few per cent (paired ratio $0.975$--$1.056$), so OA-VCB's margins over it are essentially those
over the reference ($G = 9.6$--$10.2$ on channels, $2.0$--$2.2$ on barriers, $2.26$ for elasticity). The contained Jacobi member
is less accurate on every set ($96$--$100\%$ of the samples on the scalar sets).

\paragraph{Numerical check of the containment.} With the weights of the construction in Proposition~\ref{prop:containment},
the network and the classical cycle agree to $3.6\times10^{-10}$ for Jacobi weights (diffusion, \path{mathcheck/check_oa_vcp.log};
$1.1\times10^{-10}$ tensor coefficient, \path{mathcheck/check_oa_vcb_tensor.log}; $1.4\times10^{-10}$ elasticity,
\path{mathcheck/check_oa_vcb_elasticity.log}; $\omega = 2/3$, $\nu = 4$), to $4.8\times10^{-10}$ for the Chebyshev weights of (b)
with $\lambda = 3$ and $q = 10$, and to $3.7\times10^{-10}$ for the first construction with a random cubic $p$ whose residual
polynomial has a pair of complex roots (\path{mathcheck/check_poly_containment.log}; maximum relative differences of the
outputs on twelve barrier test samples). On every level operator of these samples the product form of (b) and the three-term
recurrence of the Chebyshev semi-iteration give the same smoother vectors to a relative difference of $5.9\times10^{-16}$.

\paragraph{Transposed transfers.} With $T = [\Phi_k^\star\;E_F]$, \eqref{eq:inverse} reads
$A_k^{-1} = T\,\operatorname{diag}(S_k,A_{FF})^{-1}\,T^\top$: the residual is analysed in the \emph{dual} coordinates $T^\top r$
and the correction synthesised with $T$. Any invertible synthesis $T$ admits this form. What it does not admit is a
\emph{block-diagonal} latent map between the primal pair $T^{-1}$ (analysis) and $T$ (synthesis), as used by exactly
invertible lifting networks: there the latent map would have to equal $T^{-1}A_k^{-1}T = \operatorname{diag}(S_k,A_{FF})^{-1}\,T^\top T$,
which is block diagonal only if $T^\top T$ is (\path{mathcheck/check_oa_transpose_pair.log}). With $X^{(m)}$ replaced by the
exact $-A_{FF}^{-1}A_{FC}$ and exact level solves the cycle reproduces $A^{-1}$ to $10^{-12}$--$10^{-14}$; with the approximation
the coupling between levels that $X^{(m)}$ leaves is handled by the multiplicative structure of the cycle.

\paragraph{Reproduction of the kernel (Proposition~\ref{prop:constants}).} On level $k$ the statement covers the fine nodes
whose $m$-step neighbourhood in the level-$k$ (Galerkin) graph satisfies both hypotheses. On the finest level that
neighbourhood spans $m$ mesh widths, and the measured rigid-body defect of the elasticity transfers is at round-off for most
rows (median $7\times10^{-17}$), but not for all (maximum $0.40$ over rows not directly coupled to $\Gamma_D$, whose $m$-step
neighbourhoods may still reach it). On coarser levels the Galerkin stencils widen, the hypotheses cover fewer rows (41\% on the
second level, none on the third) and the defects grow (median over all rows $1.5\times10^{-4}$ and $1.6\times10^{-2}$;
\path{mathcheck/check_oa_vcb_elasticity.log}). We report these measured defects rather than a physical-width bound.

\paragraph{Homogeneity in finite precision.} In the single-precision implementation $\sigma$ is evaluated as a scaled root
mean square ($\max|z|\cdot\|z/\max|z|\|_{\mathrm{rms}}$, which cannot underflow), and $\mathcal{N}(\lambda r) = \lambda\mathcal{N}(r)$
holds to $7\times10^{-7}$ for residual scales from $10^{-20}$ to $10^{8}$ on CPU and GPU (\path{mathcheck/check_projection_precision.log}).
At $10^{-30}$ the smallest output components fall below the smallest normal single-precision number; GPU kernels flush such
values to zero and the deviation grows to $3\times10^{-5}$ (finiteness and the energy bound are unaffected).

\paragraph{The energy safeguard in the implementation.} The implementation differs from Proposition~\ref{prop:safeguard} in
three ways; each preserves energy non-increase, none preserves the minimisation equality in general. (i)~\emph{Data.}
Deflation and the output projection use the residual and the fine-level operator in double precision, as assembled; the
cycle itself runs in single precision, which changes the directions $W$ but not the argument. (ii)~\emph{Regularisation.} The
Jacobi-scaled Gram matrix $G$ is symmetrised and a ridge $R = 10^{-10}I$ is added; for $(G+R)y = g$ the energy change is
$-y^\top Gy - 2y^\top Ry\le 0$, but $y$ is no longer the minimiser over $\operatorname{span}W$ when $R$ matters (nearly
dependent directions). (iii)~\emph{Acceptance.} For the computed correction $d$ of a residual $b$ (the deflation step and the
output projection alike), energy non-increase is equivalent to $2d^\top b - d^\top A d\ge 0$; this is evaluated with the
double-precision operator, and if it fails $d$ is replaced by $d\,\max(d^\top b,0)/d^\top Ad$, which may shrink or reject it.
The implemented guarantee is therefore $\|e-c\|_A\le\|e - x_0\|_A\le\|e\|_A$ up to double-precision rounding in the acceptance
test, with $c$ not necessarily the minimiser. No eigendecomposition is used in the forward pass, so the gradients stay defined
at repeated eigenvalues and rank changes; zero right-hand sides are mapped to zero explicitly. Numerically
(\path{mathcheck/check_projection_precision.log}): no violation at tolerance $10^{-12}$ on real samples with random and
classical weights, on 210 stress cases with nearly or exactly duplicate directions (as projection and as deflation with zero
padding), or for residual scales from $10^{-30}$ to $10^{8}$; finite gradients at duplicate columns and at zero and tiny
residuals; the acceptance step never triggered. An earlier implementation formed $G$ in single precision and broke the bound in
11 of the 210 stress cases. Re-evaluating trained checkpoints with the present implementation changed the corrections by at most
$7\times10^{-6}$ relative and the mean errors by less than $10^{-5}$ relative, with no acceptance failures and no rank loss
(\path{diagnostics_cost/checkpoint_audit_*.log}).

\section{Proofs}\label{app:proofs}
\paragraph{Proof of Lemma~\ref{lem:routev}.} $E_F^\top A_k\Phi_k^\star = A_{FC} - A_{FF}A_{FF}^{-1}A_{FC} = 0$. The matrix
$T = [\Phi_k^\star\;E_F]$ is unit block lower triangular, hence invertible, and $T^\top A_k T$ is block diagonal; inverting
$A_k = T^{-\top}\operatorname{diag}(S_k,A_{FF})T^{-1}$ gives \eqref{eq:inverse}.\hfill$\square$

\paragraph{Reproduction of the kernel.}
\begin{proposition}[local reproduction of the kernel]\label{prop:constants}
Let $t$ be a level-$k$ vector with restriction $t_C$ to the coarse nodes (constants for diffusion; translations and rotations for
elasticity, which $P_1$ interpolation reproduces), and let $\mathcal{N}_r(f)$ denote the nodes within $r$ graph steps of a fine node
$f$. If $P_k t_C = t$ on $\mathcal{N}_m(f)$ and $(A_k t)_g = 0$ for every fine node $g\in\mathcal{N}_{m-1}(f)$, then
$(\widehat\Phi_k t_C)_f = t_f$.
\end{proposition}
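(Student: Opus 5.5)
The plan is to track the action of the Jacobi iteration \eqref{eq:jacobi} on the single coarse vector $t_C$, and to show by induction that the error against $t$ vanishes on a neighbourhood of $f$ that shrinks by one graph ring per step. Throughout, the graph is that of the level-$k$ operator $A_k$ (two nodes are adjacent when the corresponding entry of $A_k$ is nonzero), so $\mathcal{N}_r(f)$ is measured in the sparsity of $A_k$.

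Set $y^{(j)} = X^{(j)}t_C$, a vector on the fine nodes, and write $t=(t_C;t_F)$ in the $C/F$ ordering. Multiplying \eqref{eq:jacobi} by $t_C$ gives
\[
  y^{(j+1)} = y^{(j)} + \omega D_{FF}^{-1}\big(-A_{FC}t_C - A_{FF}y^{(j)}\big).
\]
Since $(A_kt)_F = A_{FC}t_C + A_{FF}t_F$, the error $\varepsilon^{(j)} = y^{(j)} - t_F$ satisfies
\[
  \varepsilon^{(j+1)} = \varepsilon^{(j)} - \omega D_{FF}^{-1}\big(A_{FF}\varepsilon^{(j)} + (A_kt)_F\big).
\]
The claim to prove by induction on $j=0,\dots,m$ is that $\varepsilon^{(j)}_g = 0$ for every fine node $g\in\mathcal{N}_{m-j}(f)$.

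\emph{Base case.} $y^{(0)} = P_{k,F}t_C$ is the fine block of $P_kt_C$. The first hypothesis $P_kt_C = t$ on $\mathcal{N}_m(f)$ therefore gives $\varepsilon^{(0)}_g=0$ at every fine $g\in\mathcal{N}_m(f)$.

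\emph{Inductive step.} Take a fine $g\in\mathcal{N}_{m-j-1}(f)$ with $j\le m-1$. Then $g\in\mathcal{N}_{m-1}(f)$, so $(A_kt)_g=0$ by the second hypothesis. Next, $(A_{FF}\varepsilon^{(j)})_g$ involves only $\varepsilon^{(j)}_h$ for fine $h$ equal or adjacent to $g$. Every such $h$ lies in $\mathcal{N}_{m-j}(f)$, so $\varepsilon^{(j)}_h=0$ by the induction hypothesis. Finally $D_{FF}^{-1}$ is diagonal and does not spread support. Hence $\varepsilon^{(j+1)}_g = 0$.

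\emph{Conclusion.} At $j=m$ the neighbourhood is $\mathcal{N}_0(f)=\{f\}$, so $(X^{(m)}t_C)_f = t_f$. Since $f$ is a fine node, this is exactly $(\widehat\Phi_kt_C)_f = t_f$.

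The only delicate point is bookkeeping rather than estimation. One must check that the neighbourhoods in both hypotheses are taken in the same graph as the one that governs $A_{FF}$, namely the level-$k$ Galerkin graph. One must also check that each Jacobi step enlarges the dependence region by exactly one ring. This works because $A_{FF}$ is a principal submatrix of $A_k$, whose fine–fine adjacency is a subgraph of that of $A_k$, and because the source term $(A_kt)_F$ needs to vanish only on $\mathcal{N}_{m-1}(f)$. No convergence of the Jacobi iteration and no property of $\omega$ are used, which matches the remark after \eqref{eq:jacobi}.
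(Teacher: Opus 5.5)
Your proof is correct and is essentially the paper's argument: an induction on the Jacobi step $j$ showing agreement with $t$ on $\mathcal{N}_{m-j}(f)$, which uses that one update at a fine node $g$ reads values only on $\mathcal{N}_1(g)$ and that $(A_kt)_g=0$ there. The only difference is cosmetic: you track the fine-node error $X^{(j)}t_C - t_F$ with the source term $(A_kt)_F$ split off, whereas the paper tracks the full vector $\widehat\Phi^{(j)}t_C$ and writes the update as $-\omega(A_k\widehat\Phi^{(j)}t_C)_g/A_{gg}$.
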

\begin{proof}
Write $\widehat\Phi^{(j)} = [I;X^{(j)}]$; the update in \eqref{eq:jacobi} applied to $t_C$ at a fine node $g$ is
$-\omega\,(A_k\widehat\Phi^{(j)}t_C)_g/A_{gg}$, which depends on $\widehat\Phi^{(j)}t_C$ on $\mathcal{N}_1(g)$ only. We show by
induction that $\widehat\Phi^{(j)}t_C = t$ on $\mathcal{N}_{m-j}(f)$. For $j = 0$ this is the hypothesis. If it holds for $j$ and
$g\in\mathcal{N}_{m-j-1}(f)$, then $\mathcal{N}_1(g)\subset\mathcal{N}_{m-j}(f)$, so the update at $g$ is
$-\omega(A_kt)_g/A_{gg} = 0$ and $\widehat\Phi^{(j+1)}t_C = t$ on $\mathcal{N}_{m-j-1}(f)$. Take $j = m$.
\end{proof}
The statement is graph-local and conditional; measured reproduction defects of the elasticity transfers are in
\appref{app:impl}.

\paragraph{Containment (Proposition~\ref{prop:containment}): explicit construction.} Set the encoder, step maps and decoder of both
smoothers as follows: channel $1$ of the latent carries $z/\sigma$, channel $0$ starts at zero, $V_t$ selects channel $0$,
$\mathcal{L}_t$ maps $(s,\,q)$ to $a_{\nu-t}\,s_1 + q_0$ in channel $0$ and to zero elsewhere, $\mathcal{M}_t = 0$, and the decoder
reads channel $0$.
\begin{proof}
Write $s_0 = x/\sigma$ for channel $0$ at node $i$; its aggregate is
$q_0 = \sum_{j\ne i}W_{ij}x_j/\sigma = \big((B_k - I)x\big)_i/\sigma$. Since the step map is added to the current state, step $t$
gives $s_0 + a_{\nu-t}s_1 + q_0 = \big(B_kx + a_{\nu-t}D_k^{-1}b\big)_i/\sigma$, that is, $x_t = B_kx_{t-1} + a_{\nu-t}D_k^{-1}b$ with
$x_0 = 0$, and Horner's rule gives $x_\nu = \sum_{j=0}^{\nu-1}a_jB_k^{\,j}D_k^{-1}b = p(B_k)D_k^{-1}b$. Multiplying the decoded
channel by $\sigma$ undoes the scaling, and $A_k^{-1}b - p(B_k)B_kA_k^{-1}b = R(B_k)A_k^{-1}b$. Conversely, a real $R$ of degree at
most $\nu$ with $R(0) = 1$ has the form $1-\mu p(\mu)$ with $p$ real of degree at most $\nu-1$. The remaining operations in
\eqref{eq:cycle}--\eqref{eq:projection} are the same in the network and the classical cycle.
\end{proof}
\emph{Special cases.} When $R$ has real roots $1/\omega_t$, the step maps $\mathcal{L}_t(s,q) = \omega_t(s_1 - s_0 - q_0)$ in channel
$0$ realise the Richardson steps $x \leftarrow x + \omega_t D_k^{-1}(b - A_k x)$, $t = 1,\dots,\nu$, $x = 0$ initially, for which
$R(\mu) = \prod_{t=1}^{\nu}(1 - \omega_t\mu)$: step $t$ gives $s_0 + \omega_t(s_1 - s_0 - q_0) = \big(x + \omega_t
D_k^{-1}(b - A_k x)\big)_i/\sigma$ and multiplies the error by $I-\omega_tB_k$. (a)~$\omega_t=\omega$ is weighted Jacobi smoothing.
(b)~Degree-$\nu$ Chebyshev smoothing on $[\lambda/q,\lambda]$ takes $\omega_t^{-1} = \theta - \delta\cos\big((2t-1)\pi/(2\nu)\big)$ with
$\theta = (\lambda + \lambda/q)/2$, $\delta = (\lambda - \lambda/q)/2$; both $\prod_t(1-\omega_t\mu)$ and
$T_\nu\big((\theta-\mu)/\delta\big)/T_\nu(\theta/\delta)$ are polynomials of degree $\nu$ in $\mu$ that equal $1$ at $\mu = 0$ and
vanish at the $\nu$ distinct Chebyshev nodes $\mu_t = \omega_t^{-1}$, so they coincide \cite{saad2003}.

\paragraph{Positive homogeneity (Proposition~\ref{prop:homogeneity}).}
\begin{proof}
For a smoother, $z$ and $\sigma$ scale with $\lambda$, the features $z/\sigma$ and $\log d$ do not, and the output carries the
factor $\sigma$; so $\mathcal{S}(\lambda b) = \lambda\mathcal{S}(b)$ (and $\mathcal{S}(0) = 0$ by definition). The residual
updates, transfers, coarsest solve and deflation \eqref{eq:deflation} are linear. In \eqref{eq:projection} every column of $W$
except $Z$ scales with $\lambda$, so $\operatorname{span}W$ is unchanged and the projection of $\lambda(r - Ax_0)$ onto it is
$\lambda$ times the projection of $r - Ax_0$.
\end{proof}

\paragraph{Energy-norm safeguard (Proposition~\ref{prop:safeguard}).}
\begin{proof}
$W^\top A W y = W^\top(r - Ax_0) = W^\top A(e - x_0)$ are the normal equations of $\min_y\|e - x_0 - Wy\|_A$, and $y = 0$ is
admissible. Likewise \eqref{eq:deflation} is the $A$-orthogonal projection of $e$ onto $\operatorname{span}Z$, with $w = 0$
admissible.
\end{proof}

\section{Stage 1: protocol chronology, provenance and further results}\label{app:stage1}
\paragraph{Chronology.} The Stage 1 protocol (16 October) fixed the comparators, the primary comparisons and the statistics before any
confirmation operator was generated; amendment~1 (16 October) and a pre-confirmation note (17 October) recorded fitted
coefficients, checkpoint hashes and script hashes before the confirmation numbers were computed; the confirmation run was
executed once. Amendment~2 (18 October) fixed endpoints R1--R4 (channel bracket Jacobi-PCG(8)--PCG(14); barrier bracket
PCG(5)--PCG(8); PCG(14) against SGS-PCG(4), descriptive; local $K_5$ criterion $\ge90$ of $100$ inputs with $\rho\le0.99$), the repaired
timings and the operation ledger, before the reserve operators were generated (random seeds $2026101603$ for channels
and $2026101604$ for barriers; confirmation set: $2026101601$ and $2026101602$). SHA-256 lists of the frozen protocol, both amendments,
the pre-confirmation note and the reports are stored with them in \path{ext/}.

\paragraph{Departure from the protocol: the frozen-scale gate.} Amendment~2 required a check, at run time, of the frozen scale file
(gate B4). This check was not executed when the runs were made. The SHA-256 of the scale file
($8d0cc6f1\ldots$) matches the hash recorded before the runs, but this comparison was made retrospectively; it does not prove the
content of the file at run time, and we do not claim that the gate was passed.

\paragraph{Operation ledger.} The ledger counts multiply--accumulate equivalents (one multiply and one add; a diagonal scaling,
division or vector update of length $n$ counts $n$; a dot product counts $n$ and one scalar global reduction) of one pass of the
complete cycle, set-up excluded. It excludes the operator-adapted hierarchy and $u_c$, which are common to all methods, and
method-specific set-up: the Chebyshev eigenvalue bounds, for example, cost a further $2.59$ million MAC-equivalents on channels,
which would lower the OA-VCB/Chebyshev ratio from $414$ to $165$. The ledger also omits the small dense solves of the Galerkin
wrapper, does not enumerate every Gram symmetrisation, scaling and masking operation, and counts the unpadded wrapper: on
channels, where no near-kernel vector is detected, the executed network wrapper still carries a padded zero column, whereas the
ledger uses the wrapper without it. The dense-layer and cycle sparse-product counts agree with independent counters, which does
not certify every executed operation. Activation evaluations ($13.1$ million per OA-VCB pass) and nonlinear global reductions
($18$) are reported separately. The ledger is neither a time nor a communication volume, and its ratios (for example about $161$
between OA-VCB and the $14$-step Jacobi-PCG cycle) hold only under this convention.

\paragraph{Timings.} One Intel Xeon processor at $2.10$\,GHz with two threads, Python~3.13, NumPy~2.5, PyTorch~2.14 (CPU) and PyAMG~5.3;
each component was timed five times per operator, and tables report medians over $50$ operators of the per-operator medians. ``To
OA-VCB error'' selects, offline and per operator, the first iteration whose energy error reaches the one-pass OA-VCB error (an
oracle rule). ``Estimated end-to-end'' is the median over operators of the per-operator sum of that time and the operator's own
$u_c$ construction time, not the sum of two population medians. The operator-data share of the OA-VCB pass (about $87$ and
$86$\,ms) comes from a separate per-component profile taken on the last of the five repetitions; it and the median pass time are
different summaries and do not partition one measurement. The direct solve is never charged $u_c$.

\paragraph{Independent recomputation.} The Stage 1 statistics (paired geometric means, win fractions, bootstrap endpoints with the
fixed random seed $20261016$ and $10{,}000$ resamples) were recomputed from the per-operator errors by an independent script that does
not use the study's statistics code; agreement is to $9\times10^{-16}$. A second independent audit of the principal comparisons
reported here (confirmation and reserve comparisons, pooled load comparisons with operator-cluster resampling, per-instance
candidates, local counts and medians, matched-error timings and ledger ratios) agreed with the stored values to $5.3\times10^{-15}$. The
table generator also re-computes every interval and compares it with the stored endpoints to $10^{-9}$.

\paragraph{Further results.} Table~\ref{tab:s1ladder} lists every one-pass comparator of Stage 1 on the confirmation set;
Table~\ref{tab:s1sub} splits the barrier operators by floating islands; Table~\ref{tab:s1loadfam} gives the off-training loads by
family.
\begin{table}[tbp]\centering\footnotesize\setlength{\tabcolsep}{3pt}
\caption{All one-pass comparators of Stage 1 on set C (100 operators per family; OA-VCB initialisation 0), in the same cycle with only the smoothers replaced. $G(\mathrm{Cheb}\!\to\!X)$: Chebyshev reference error divided by the comparator\textquoteright{}s; $G(X\!\to\!\mathrm{OA\text{-}VCB})$: comparator error divided by OA-VCB\textquoteright{}s, with the pointwise 95\% bootstrap interval; $W$: fraction of operators on which OA-VCB is more accurate. Steepest descent and minimal residual use four Jacobi-preconditioned steps per call; the K2-cycle is a recursive flexible-CG K-cycle on the coarse levels with Chebyshev smoothers; the Lanczos-bound Chebyshev variant uses an interval ratio chosen on T; the fitted polynomials are fitted on T, shared by all levels (contained class) or one set per level (not contained); symmetric Gauss--Seidel uses two symmetric sweeps. The per-instance candidate\textquoteright{}s error is an upper bound on that of the best polynomial class member, not a certified optimum. OA-VCL and OA-VCBL are learned models of the original study with residual-linear smoothers (the fitted Galerkin output weights of OA-VCBL need not make its whole input--output map linear).}\label{tab:s1ladder}
\begin{tabular}{>{\raggedright\arraybackslash}p{5.8cm}lll}\toprule
\multicolumn{4}{l}{\emph{channels}} \\
comparator $X$ & $G(\mathrm{Cheb}\!\to\!X)$ & $G(X\!\to\!\mathrm{OA\text{-}VCB})$ & $W$ \\\midrule
steepest descent, 4 steps (Jacobi) & 0.82 & 11.26 [10.35, 12.16] & 1.00 \\
minimal residual, 4 steps (Jacobi) & 0.83 & 11.06 [10.15, 11.96] & 1.00 \\
K2-cycle (Chebyshev smoothers) & 0.97 & 9.48 [8.71, 10.25] & 1.00 \\
Chebyshev, Lanczos bounds & 1.31 & 6.99 [6.41, 7.56] & 0.99 \\
fitted polynomial, shared (contained) & 1.79 & 5.14 [4.79, 5.51] & 1.00 \\
fitted polynomials, per level & 1.86 & 4.93 [4.54, 5.30] & 0.99 \\
Jacobi-PCG(4) (primary) & 1.61 & 5.70 [5.25, 6.13] & 0.99 \\
per-instance polynomial candidate & 2.22 & 4.14 [3.82, 4.45] & 0.99 \\
symmetric Gauss--Seidel, two sweeps & 1.27 & 7.23 [6.63, 7.83] & 0.99 \\
SGS-PCG(4) & 7.94 & 1.16 [1.08, 1.23] & 0.79 \\
OA-VCL (residual-linear smoother) & 2.48 & 3.70 [3.41, 4.04] & 1.00 \\
OA-VCBL (residual-linear smoother) & 2.60 & 3.53 [3.28, 3.80] & 1.00 \\
\midrule
\multicolumn{4}{l}{\emph{barriers}} \\
comparator $X$ & $G(\mathrm{Cheb}\!\to\!X)$ & $G(X\!\to\!\mathrm{OA\text{-}VCB})$ & $W$ \\\midrule
steepest descent, 4 steps (Jacobi) & 0.82 & 2.42 [2.19, 2.67] & 0.92 \\
minimal residual, 4 steps (Jacobi) & 0.79 & 2.52 [2.27, 2.78] & 0.91 \\
K2-cycle (Chebyshev smoothers) & 1.40 & 1.42 [1.26, 1.60] & 0.75 \\
Chebyshev, Lanczos bounds & 1.12 & 1.79 [1.63, 1.96] & 0.91 \\
fitted polynomial, shared (contained) & 1.13 & 1.76 [1.60, 1.93] & 0.91 \\
fitted polynomials, per level & 1.30 & 1.54 [1.40, 1.68] & 0.84 \\
Jacobi-PCG(4) (primary) & 1.37 & 1.45 [1.33, 1.59] & 0.82 \\
per-instance polynomial candidate & 2.65 & 0.75 [0.68, 0.83] & 0.34 \\
symmetric Gauss--Seidel, two sweeps & 1.01 & 1.97 [1.79, 2.17] & 0.91 \\
SGS-PCG(4) & 3.44 & 0.58 [0.50, 0.68] & 0.29 \\
OA-VCL (residual-linear smoother) & 0.67 & 2.99 [2.72, 3.30] & 0.97 \\
OA-VCBL (residual-linear smoother) & 1.10 & 1.81 [1.65, 1.99] & 0.88 \\
\bottomrule\end{tabular}\end{table}

\begin{table}[tbp]\centering\footnotesize\setlength{\tabcolsep}{4pt}
\caption{Barrier operators of set C split by floating islands (geometric label; 42 island operators and 52 strict non-island operators of 100; the remaining six are excluded here as ambiguous or too few). $G(X\!\to\!\mathrm{OA\text{-}VCB})$ for OA-VCB initialisation 0 with pointwise 95\% bootstrap intervals; values below one favour the comparator. Statements about intervals are not tests of equivalence.}\label{tab:s1sub}
\begin{tabular}{lll}\toprule
comparator & island & strict non-island \\\midrule
Jacobi-PCG(4) & 1.45 [1.20, 1.72] & 1.49 [1.37, 1.63] \\
K2-cycle (Chebyshev) & 0.95 [0.80, 1.13] & 1.99 [1.82, 2.17] \\
SGS-PCG(4) & 1.10 [0.90, 1.33] & 0.37 [0.32, 0.43] \\
per-instance polynomial candidate & 0.57 [0.47, 0.69] & 0.96 [0.89, 1.04] \\
\bottomrule\end{tabular}\end{table}

\begin{table}[tbp]\centering\footnotesize\setlength{\tabcolsep}{3pt}
\caption{Stage 1 set LC by load family (50 channel operators each): $G(\text{Jacobi-PCG(4)}\!\to\!\text{network})$ with the pointwise 95\% bootstrap interval, for the three fixed-load initialisations and the random-load network. Values below one mean that the one-pass Jacobi-PCG(4) cycle is more accurate.}\label{tab:s1loadfam}
\begin{tabular}{lllll}\toprule
 & \multicolumn{3}{c}{fixed-load network, initialisation} & random-load \\ \cmidrule(lr){2-4}
load family & 0 & 1 & 2 & network \\\midrule
$f=-1$ & 0.15 [0.13, 0.18] & 0.09 [0.08, 0.09] & 0.48 [0.47, 0.50] & 3.49 [3.10, 3.89] \\
smooth, both signs & 0.17 [0.13, 0.23] & 0.09 [0.07, 0.12] & 0.54 [0.46, 0.64] & 3.64 [3.29, 3.99] \\
localised source & 0.27 [0.19, 0.39] & 0.61 [0.49, 0.77] & 0.56 [0.42, 0.74] & 1.38 [1.06, 1.76] \\
random-load draw & 0.35 [0.22, 0.58] & 0.26 [0.16, 0.46] & 0.97 [0.73, 1.30] & 3.47 [3.10, 3.84] \\
\bottomrule\end{tabular}\end{table}

\paragraph{Threats to validity.} (i)~The Stage 1 results reported here evaluate stored data-seed-1 checkpoints of the original study
(three fixed-load OA-VCB initialisations, residual-linear controls and the random-load model of the load challenge); no neural network
was trained or retrained for them. (ii)~Three initialisations on the same $100$ operators are not $300$ independent operators.
(iii)~Bootstrap intervals are pointwise and conditional on the trained models. (iv)~The learned cycle runs in single precision
inside double-precision deflation and output, whereas the classical cycles run in double precision; no precision ablation was run.
(v)~The per-instance polynomial candidate's error is an upper bound on the error of the best class member, not a certified optimum,
and a candidate that loses does not establish superiority of OA-VCB over the class optimum. (vi)~$K_5$ is a degree control, not an equal-information or equal-cost control. (vii)~The two PCG-type families
used as smoothing calls are not the whole-system solvers. (viii)~Anisotropy and elasticity were not evaluated against the Stage 1
controls. (ix)~The frozen-scale gate was not executed at run time (above).

\paragraph{Provenance.} Stage 1 records are the per-operator JSON files under \path{ext/runs/}; \path{make_tables_stage1.py} reads
them, writes \path{tables/numbers_stage1.txt} (every Stage 1 number quoted in the text) and \path{tables/MANIFEST_stage1.txt}
(SHA-256 of every input file). The model code hash of all records is \texttt{f8653612eb84} (\path{run_identity.py}).

\section{Original-study main comparison, larger classical budgets and platform costs}\label{app:budget}
\begin{table}[tbp]\centering\footnotesize\setlength{\tabcolsep}{4pt}
\caption{Main comparison of the original study, held-out data seeds 1--3 (100 test operators each; L3, $k_0=2$; Chebyshev reference only). Energy: mean of the three per-seed arithmetic mean relative energy errors. $G$: paired geometric-mean ratio of the classical reference's error to the model's over the pooled 300 samples [95\% CI]; $W$: fraction of samples on which the model is more accurate than the reference. Rows for the other learned baselines of the original study are omitted here; they are kept in the archive and are not part of the comparisons of this paper.}\label{tab:core}
\begin{adjustbox}{max width=\linewidth}
\begin{tabular}{lllllll}\toprule
 & \multicolumn{3}{c}{channels (contrast $10^4$)} & \multicolumn{3}{c}{barriers (contrast $10^{-4}$)} \\ \cmidrule(lr){2-4}\cmidrule(lr){5-7}
model & energy & $G$ [95\% CI] & $W$ & energy & $G$ [95\% CI] & $W$ \\\midrule
OA-VCB (ours) & 2.75e-3 & 9.89 [9.57, 10.20] & 1.00 & 5.80e-2 & 2.00 [1.86, 2.14] & 0.88 \\
OA-VC (no deflation, plain output) & 3.46e-3 & 8.80 [8.45, 9.12] & 1.00 & 3.73e-1 & 0.61 [0.54, 0.70] & 0.43 \\
\midrule
classical reference (Chebyshev, $\nu=4$) & 2.57e-2 & 1 & -- & 8.11e-2 & 1 & -- \\
two-grid PCG, 6 iterations & 2.08e-3 & 14.13 [13.49, 14.81] & 1.00 & 5.05e-2 & 10.36 [8.76, 12.23] & 0.89 \\
$u_c$ alone & 3.73e-1 & 0.07 [0.07, 0.07] & 0.00 & 4.59e-1 & 0.07 [0.07, 0.08] & 0.00 \\
\bottomrule\end{tabular}
\end{adjustbox}
\end{table}

\begin{table}[tbp]\centering\footnotesize\setlength{\tabcolsep}{4pt}
\caption{Stronger classical smoothers and repeated classical passes, held-out seed 1, measured on one CPU (Intel(R) Xeon(R) Processor @ 2.80GHz, 2 threads; a different machine from Table~\ref{tab:cost}, so OA-VCB was re-timed): energy, the arithmetic mean relative energy error over the 100 test samples (the quantity plotted in Figure~\ref{fig:avt}); $G$, the paired geometric-mean ratio of each classical variant's error to OA-VCB's [95\% CI] (below 1: the classical variant is more accurate in this metric); $W$, the fraction of samples on which OA-VCB is more accurate; and the mean time per sample from $u_c$ with set-up, in ms (classical rows: per-sample minimum over the repetitions; OA-VCB: operator data and batched inference, minimum over the repetitions of the whole-test-set time divided by the number of samples). The eight- and sixteen-step rows exceed the residual-polynomial degree covered by Proposition~\ref{prop:containment} for the four-stage architecture; no learned model with that larger budget was trained.}\label{tab:avt}
\begin{adjustbox}{max width=\linewidth}
\begin{tabular}{lllll}\toprule
method & energy & $G$ [95\% CI] & $W$ & ms \\\midrule
\multicolumn{5}{l}{\emph{channels}: OA-VCB 2.55e-3, 268\,ms} \\
\quad reference, Chebyshev $\nu=4$ (Table~\ref{tab:core}) & 2.59e-2 & 10.23 [9.75, 10.71] & 1.00 & 83 \\
\quad reference, Chebyshev $\nu=8$ & 1.08e-2 & 4.23 [4.00, 4.46] & 1.00 & 79 \\
\quad reference, Chebyshev $\nu=16$ & 4.72e-3 & 1.83 [1.73, 1.93] & 0.99 & 85 \\
\quad reference, symmetric Gauss--Seidel, 4 sweeps & 2.06e-2 & 8.10 [7.69, 8.49] & 1.00 & 77 \\
\quad reference ($\nu=4$) applied twice & 3.50e-3 & 1.30 [1.21, 1.40] & 0.79 & 87 \\
\quad reference ($\nu=4$) applied 3 times & 5.51e-4 & 0.19 [0.17, 0.21] & 0.00 & 88 \\
\multicolumn{5}{l}{\emph{barriers}: OA-VCB 4.36e-2, 273\,ms} \\
\quad reference, Chebyshev $\nu=4$ (Table~\ref{tab:core}) & 7.51e-2 & 2.13 [1.90, 2.38] & 0.92 & 80 \\
\quad reference, Chebyshev $\nu=8$ & 6.47e-2 & 1.25 [1.09, 1.42] & 0.64 & 83 \\
\quad reference, Chebyshev $\nu=16$ & 5.87e-2 & 0.82 [0.69, 0.97] & 0.45 & 85 \\
\quad reference, symmetric Gauss--Seidel, 4 sweeps & 7.30e-2 & 2.11 [1.87, 2.36] & 0.92 & 80 \\
\quad reference ($\nu=4$) applied twice & 2.66e-2 & 0.29 [0.24, 0.35] & 0.13 & 84 \\
\quad reference ($\nu=4$) applied 3 times & 1.21e-2 & 0.05 [0.03, 0.06] & 0.02 & 88 \\
\bottomrule\end{tabular}
\end{adjustbox}
\end{table}

\begin{table}[tbp]\centering\footnotesize\setlength{\tabcolsep}{4pt}
\caption{Cost per test sample on one CPU (2 threads), channels, held-out seed 1, in ms. Every row starts from the assembled fine matrix; its assembly (1.4 / 5.4 ms, common to all methods) is not included. Per-operator set-up is included in every row; the sparsity patterns of the hierarchy, computed once per mesh (71 / 871 ms), and training are not. Classical rows: mean over the 100 samples of the per-sample minimum over 3 (L3) / 2 (L4) repetitions; OA-VCB operator data and inference: minimum over the repetitions of the whole-test-set time divided by 100. Iteration counts were determined beforehand, untimed, with the reference solution (a matched-accuracy benchmark, not a stopping criterion).}\label{tab:cost}
\begin{adjustbox}{max width=\linewidth}
\begin{tabular}{lll}\toprule
 & L3, $k_0=2$ (3{,}486 unknowns) & L4, $k_0=3$ (14{,}398) \\\midrule
\multicolumn{3}{l}{\emph{OA-VCB (one pass)}} \\
$u_c$ (level-$k_0$ Galerkin factor and solve) & 2.1 & 9.6 \\
operator data (hierarchy, Galerkin levels, $Z$) & 66.6 & 841.3 \\
inference, batch 8 (batch 1) & 96.0 (176.7) & 1{,}561.0 (1{,}461.8) \\
\textbf{total} (batch 8) & 164.8 & 2{,}411.8 \\
\midrule\multicolumn{3}{l}{\emph{single classical pass, error not matched (10 times larger)}} \\
classical reference, from $u_c$ & 60.1 & 769.1 \\
\midrule\multicolumn{3}{l}{\emph{iterative, run to OA-VCB's error on each sample [median; range of iterations]}} \\
two-grid PCG & 5.7 [6; 5--7] & 24.7 [6; 4--8] \\
SA-AMG PCG & 13.5 [5; 4--6] & 41.5 [6; 4--7] \\
PCG with the classical V(4,4) & 61.2 [3; 2--3] & 794.1 [3; 2--3] \\
\midrule\multicolumn{3}{l}{\emph{direct, to solver accuracy}} \\
sparse direct solve (SuperLU) & 8.2 & 48.1 \\
\bottomrule\end{tabular}
\end{adjustbox}
\end{table}

Table~\ref{tab:avt} gives the paired comparison of OA-VCB with stronger and repeated Chebyshev passes on the second platform
(held-out data seed $1$; Figure~\ref{fig:avt} shows the corresponding error--time curves), and Table~\ref{tab:cost} the cost
comparison with whole-system solvers on the first platform at L3 and L4. Neither was repeated for the Stage 1 operators; the
Stage 1 cost measurement is in Table~\ref{tab:s1cost}.

\section{Original-study design ablations}\label{app:ablations}
\begin{table}[tbp]\centering\footnotesize\setlength{\tabcolsep}{4pt}
\caption{Separately trained design variants, held-out data seed 1. Deflation, output used at inference (plain cycle or Galerkin projection), training loss and smoother class; mean energy error with each model's own output and reference/model $G$, $W$. OA-VCQ was not run on channels, where no near-kernel vectors are detected.}\label{tab:factorial}
\begin{adjustbox}{max width=\linewidth}
\begin{tabular}{llllllllll}\toprule
 & deflation & output & loss & smoothers & params & \multicolumn{2}{c}{barriers: energy, ($G$, $W$)} & \multicolumn{2}{c}{channels: energy, ($G$, $W$)} \\\midrule
OA-VC & no & plain & plain & learned & 20{,}672 & 2.33e-1 & 0.71, 0.44 & 4.08e-3 & 8.83, 0.99 \\
OA-VCZ & yes & plain & plain & learned & 20{,}672 & 7.51e-2 & 1.52, 0.75 & 3.52e-3 & 8.72, 0.99 \\
OA-VCQ & no & Galerkin & plain + proj. & learned & 20{,}672 & 1.07e-1 & 1.15, 0.65 & -- & -- \\
OA-VCB & yes & Galerkin & plain + proj. & learned & 20{,}672 & 4.36e-2 & 2.13, 0.92 & 2.55e-3 & 10.23, 1.00 \\
OA-VCL & yes & plain & plain & residual-linear & 21{,}600 & 1.51e-1 & 0.70, 0.16 & 1.11e-2 & 2.59, 0.97 \\
OA-VCBL & yes & Galerkin & plain + proj. & residual-linear & 21{,}600 & 6.76e-2 & 1.13, 0.69 & 1.01e-2 & 2.72, 0.99 \\
\bottomrule\end{tabular}
\end{adjustbox}
\end{table}

\begin{table}[tbp]\centering\footnotesize\setlength{\tabcolsep}{4pt}
\caption{Operator-adapted ($m=4$) vs geometric ($m=0$) transfers, separately trained OA-VCB models on the same data. OA-VCB error with each; paired $G$ (geometric-transfer error / adapted-transfer error) and $W$; the same ratio for the two classical references; and reference/learned $G$ for each choice of transfers.}\label{tab:transfers}
\begin{adjustbox}{max width=\linewidth}
\begin{tabular}{llllllll}\toprule
case & adapted & geometric & $G$ geom./adapt. & $W$ & references: geom./adapt. & reference/learned (adapt.) & reference/learned (geom.) \\\midrule
channels, seed 1 & 2.55e-3 & 5.02e-3 & 1.91 [1.84, 2.00] & 1.00 & 1.24 & 10.23 & 6.65 \\
channels, seed 2 & 2.79e-3 & 5.46e-3 & 1.92 [1.84, 2.00] & 1.00 & 1.24 & 9.76 & 6.30 \\
channels, seed 3 & 2.92e-3 & 5.66e-3 & 1.87 [1.80, 1.95] & 1.00 & 1.25 & 9.68 & 6.50 \\
barriers, seed 1 & 4.36e-2 & 1.13e-1 & 4.07 [3.50, 4.71] & 0.97 & 2.82 & 2.13 & 1.48 \\
\bottomrule\end{tabular}
\end{adjustbox}
\end{table}

Table~\ref{tab:factorial} compares separately trained design variants on held-out seed $1$ (plain or Galerkin output, with or
without deflation, residual-linear or nonlinear smoothers); Table~\ref{tab:transfers} compares operator-adapted ($m=4$) and
geometric ($m=0$) transfers. The Galerkin output with its additional loss term adds $G = 1.63$ without deflation and $1.40$ with it
on barriers; on channels OA-VCB is $1.17$ times more accurate than OA-VCZ. Without deflation, training for the projected output
improves the projected output ($0.121\to0.107$) but degrades the plain one ($0.233\to0.311$); with deflation both improve
($0.049\to0.044$ and $0.075\to0.066$). The gains are therefore not independent and additive. One OA-VCBL run (initialisation seed
$2$) ended with a $2.8$ times higher final training loss and a plain-cycle error of $5.9\times10^{-2}$ instead of
$1.07\times10^{-2}$, while its projected mean error is similar ($9.8\times10^{-3}$ against $1.01\times10^{-2}$). The adapted
transfers help the learned cycle more than the classical one ($1.24$ and $2.82$ for the references) at higher cost: operator data
$121$--$152$ instead of $11$--$14$\,ms per sample on the training platform's CPU. Because the models are trained separately, both
comparisons include the effect on optimisation.

\section{Repeated training and the output/loss ablation}\label{app:repeat}
\begin{table}[tbp]\centering\footnotesize\setlength{\tabcolsep}{4pt}
\caption{Repeated training with initialisation seeds 0 (the runs of the other tables), 1 and 2 on the same data. (a) OA-VCB: reference/OA-VCB $G$ ($W$) per data seed and initialisation seed, and the range of the mean energy error over the three trained models. (b) The residual-linear control OA-VCBL on channels, data seed 1: mean energy error of its projected (deployed) and plain cycle output, reference/OA-VCBL $G$ ($W$), and OA-VCBL/OA-VCB $G$ ($W$) with OA-VCB of the same initialisation seed (the two factors of Section~\ref{sec:ablation}). The ranges describe variability across training runs; the three models are evaluated on the same 100 test operators, so they are not independent test observations.}\label{tab:initseeds}
{\small (a) OA-VCB}\par\smallskip
\begin{adjustbox}{max width=\linewidth}
\begin{tabular}{llllll}\toprule
class & data seed & init.\ 0 & init.\ 1 & init.\ 2 & mean energy (range) \\\midrule
channels & 1 & 10.23 (1.00) & 10.38 (1.00) & 10.74 (1.00) & 2.43e-3--2.55e-3 \\
channels & 2 & 9.76 (1.00) & 9.94 (1.00) & 10.08 (1.00) & 2.72e-3--2.79e-3 \\
channels & 3 & 9.68 (1.00) & 9.96 (1.00) & 10.30 (1.00) & 2.79e-3--2.92e-3 \\
\midrule
barriers & 1 & 2.13 (0.92) & 2.09 (0.94) & 2.04 (0.91) & 4.36e-2--4.61e-2 \\
barriers & 2 & 1.93 (0.83) & 1.93 (0.85) & 1.96 (0.85) & 6.64e-2--6.96e-2 \\
barriers & 3 & 1.93 (0.89) & 1.92 (0.90) & 1.80 (0.86) & 6.06e-2--6.30e-2 \\
\bottomrule\end{tabular}
\end{adjustbox}
\par\medskip
{\small (b) OA-VCBL, channels, data seed 1}\par\smallskip
\begin{adjustbox}{max width=\linewidth}
\begin{tabular}{lllll}\toprule
init.\ seed & projected & plain & ref./OA-VCBL & OA-VCBL/OA-VCB \\\midrule
0 & 1.01e-2 & 1.07e-2 & 2.72 (0.99) & 3.76 (1.00) \\
1 & 1.01e-2 & 1.07e-2 & 2.72 (0.99) & 3.81 (1.00) \\
2 & 9.83e-3 & 5.87e-2 & 2.63 (1.00) & 4.08 (1.00) \\
\bottomrule\end{tabular}
\end{adjustbox}
\end{table}

\begin{table}[tbp]\centering\footnotesize\setlength{\tabcolsep}{4pt}
\caption{Plain and projected outputs of the same trained weights (mean energy error, held-out seed 1). Models with loss ``plain'' optimise the plain output only; models with loss ``plain + proj.'' optimise both output losses, and their projected output is the one deployed. In each row the other column is the same network evaluated with the other output.}\label{tab:factorial8}
\begin{adjustbox}{max width=\linewidth}
\begin{tabular}{lllllll}\toprule
class & model & deflation & loss & smoothers & plain output & projected output \\\midrule
barriers & OA-VC & no & plain & learned & 0.2329 & 0.1206 \\
channels & OA-VC & no & plain & learned & 0.0041 & 0.0035 \\
barriers & OA-VCZ & yes & plain & learned & 0.0751 & 0.0487 \\
channels & OA-VCZ & yes & plain & learned & 0.0035 & 0.0033 \\
barriers & OA-VCQ & no & plain + proj. & learned & 0.3111 & 0.1072 \\
barriers & OA-VCB & yes & plain + proj. & learned & 0.0663 & 0.0436 \\
channels & OA-VCB & yes & plain + proj. & learned & 0.0026 & 0.0026 \\
barriers & OA-VCL & yes & plain & residual-linear & 0.1508 & 0.0683 \\
channels & OA-VCL & yes & plain & residual-linear & 0.0111 & 0.0104 \\
barriers & OA-VCBL & yes & plain + proj. & residual-linear & 0.1518 & 0.0676 \\
channels & OA-VCBL & yes & plain + proj. & residual-linear & 0.0107 & 0.0101 \\
\bottomrule\end{tabular}
\end{adjustbox}
\end{table}

Table~\ref{tab:initseeds} reports the three initialisation seeds of OA-VCB on every held-out data set and of the residual-linear
control OA-VCBL on channels (data seed 1). One OA-VCBL run is not a repeat of the others in the optimisation sense: initialisation
seed 1 reaches almost the same final training loss as seed 0, while seed 2 ends with a $2.8$ times higher final training loss and a
much larger plain-cycle error, with a similar projected mean error and a slightly lower paired gain (Section~\ref{sec:ablation}).
Matching initialisation seed numbers across architectures index runs; they do not match optimisation trajectories.
Table~\ref{tab:factorial8} evaluates each trained network of Table~\ref{tab:factorial} with both outputs, which separates the
output used at inference from the training objective. Projecting the output of a network trained without the projection term
also helps on its own (OA-VC $0.233\to0.121$, OA-VCZ $0.075\to0.049$ on barriers). One channel sample (id 320) is difficult for
every model without the trained Galerkin output and for OA-VCBL; we do not interpret it further.

\section{Sensitivity of the detector thresholds and of the transfer steps}\label{app:sensitivity}
Table~\ref{tab:sensitivity} varies two choices that were fixed during development, using the classical reference, which needs
no retraining (\path{diagnostics_cost/sensitivity_detector_transfers.py}). These are subsequent checks; the main experiments keep
the development values $\theta = 0.1$, $\tau = 0.05$. Recall counts islands (202 in the 300 held-out operators; an operator may
contain several) whose indicator lies in $\operatorname{span}Z$ to a relative Euclidean distance below $10^{-8}$. Precision counts
detected vectors with at least $90\%$ of their support inside one island, out of all detected vectors; there is no one-to-one
matching, so the two numerators count different objects. (a) The threshold $\theta$ matters more than $\tau$. At the default
$\tau = 0.05$, $\theta = 0.05$, $0.1$ and $0.2$ recover $202$, $195$ and $81$ of the $202$ islands; $\theta = 0.05$ does so with one
detected vector outside an island, and $\theta = 0.2$ more than doubles the reference's error on island samples. Varying $\tau$
in $[0.02, 0.1]$ changes the reference's mean error less than varying $\theta$, but it can change the detected space: at
$\theta = 0.1$ the recall rises from $195$ to $201$ and the number of detected vectors from $204$ to $224$, and at $\theta = 0.2$
the number of detected vectors grows from $116$ to $498$ while the error changes little. Only at $\theta = 0.05$ are the tested
values of $\tau$ without effect. (b) Re-evaluating the trained barrier models with the $Z$ of $\theta = 0.05$ (only deflation
and the Galerkin output change) lowers the mean error of OA-VCB by $4.6\%$ and that of the reference by $3.1\%$; the pooled
paired gain changes from $2.00$ to $2.03$ (\path{diagnostics_cost/detector_theta_learned.py}). (c) More Jacobi steps in the
transfers improve the classical reference, by $7\%$ on channels and $30\%$ on barriers from $m = 4$ to $m = 8$, while the
operator complexity grows from $4.5$ to $7.4$ and the set-up time fourfold; with $m = 1$ or $2$ the reference loses $7$--$11\%$ on
channels and $38$--$77\%$ on barriers. Retraining OA-VCB with $m = 8$ (held-out seed 1, Table~\ref{tab:m8}) keeps the learned
margin over the correspondingly stronger reference: $G = 11.17$ on channels ($10.23$ with $m = 4$) and $2.01$ on barriers
($2.13$), with $W = 1.00$ and $0.90$. Measured against the $m = 4$ reference, the $m = 8$ model gains $12.0$ and $2.88$.
\begin{table}[tbp]\centering\footnotesize\setlength{\tabcolsep}{4pt}
\caption{Transfers with $m=8$ Jacobi steps (held-out data seed 1, initialisation seed 0): OA-VCB trained with $m=8$ and its classical reference with $m=8$; mean energy errors, paired $G$ [95\% CI] and $W$; for comparison the $m=4$ margin and the improvement of the classical reference from $m=4$ to $m=8$.}\label{tab:m8}
\begin{adjustbox}{max width=\linewidth}
\begin{tabular}{lllllll}\toprule
class & OA-VCB ($m=8$) & reference ($m=8$) & $G$ [95\% CI] & $W$ & $G$ at $m=4$ & ref.\ $m=4$/$m=8$ \\\midrule
channels & 2.14e-3 & 2.42e-2 & 11.17 [10.74, 11.63] & 1.00 & 10.23 [9.75, 10.71] & 1.07 \\
barriers & 3.48e-2 & 5.19e-2 & 2.01 [1.79, 2.23] & 0.90 & 2.13 [1.90, 2.38] & 1.44 \\
\bottomrule\end{tabular}
\end{adjustbox}
\end{table}

\begin{table}[tbp]\centering\footnotesize\setlength{\tabcolsep}{4pt}
\caption{Sensitivity of two fixed choices (classical reference unless stated; L3, $k_0=2$). (a) Detector thresholds on the barrier test sets of held-out seeds 1--3 (300 operators, 202 coefficient islands in total; an operator may contain several). Recall: islands whose indicator lies in $\operatorname{span}Z$ (relative Euclidean distance below $10^{-8}$), out of all islands. Precision: detected vectors with at least $90\%$ of their support inside one island, out of all detected vectors (no one-to-one matching, so the two numerators count different objects and need not agree). Mean reference energy error over all samples and over the island samples. (b) The trained barrier OA-VCB models (trained with $\theta=0.1$) evaluated with $Z$ from $\theta=0.1$ and $0.05$ ($\tau=0.05$), pooled seeds 1--3: mean energy errors, reference/OA-VCB $G$ [95\% CI] and $W$. (c) Number $m$ of Jacobi steps in the transfers, held-out seed 1: operator complexity $\sum_k\operatorname{nnz}(A_k)/\operatorname{nnz}(A_L)$ (median), hierarchy build time (median, one CPU), reference energy error and the paired geometric-mean ratio of its errors to those at $m=4$; the learned cycle was trained only with $m=4$ and $m=0$ (Table~\ref{tab:transfers}).}\label{tab:sensitivity}
{\small (a) detector thresholds, classical reference}\par\smallskip
\begin{adjustbox}{max width=\linewidth}
\begin{tabular}{llllll}\toprule
$\theta$ & $\tau$ & recall & precision & energy, all & energy, island samples \\\midrule
0.05 & 0.02 & 202/202 & 202/203 & 7.86e-2 & 1.65e-1 \\
0.05 & 0.05 & 202/202 & 202/203 & 7.86e-2 & 1.65e-1 \\
0.05 & 0.1 & 202/202 & 202/203 & 7.86e-2 & 1.65e-1 \\
0.1 & 0.02 & 195/202 & 197/204 & 8.11e-2 & 1.72e-1 \\
0.1 & 0.05 (used) & 195/202 & 199/211 & 8.11e-2 & 1.72e-1 \\
0.1 & 0.1 & 201/202 & 208/224 & 7.86e-2 & 1.66e-1 \\
0.2 & 0.02 & 79/202 & 95/116 & 1.97e-1 & 4.43e-1 \\
0.2 & 0.05 & 81/202 & 131/211 & 1.95e-1 & 4.39e-1 \\
0.2 & 0.1 & 86/202 & 187/498 & 1.91e-1 & 4.30e-1 \\
\bottomrule\end{tabular}
\end{adjustbox}
\par\medskip
{\small (b) trained OA-VCB with the detector of a smaller $\theta$}\par\smallskip
\begin{adjustbox}{max width=\linewidth}
\begin{tabular}{lllll}\toprule
$\theta$ & OA-VCB & reference & $G$ [95\% CI] & $W$ \\\midrule
0.1 & 5.80e-2 & 8.11e-2 & 2.00 [1.86, 2.14] & 0.88 \\
0.05 & 5.53e-2 & 7.86e-2 & 2.03 [1.90, 2.17] & 0.89 \\
\bottomrule\end{tabular}
\end{adjustbox}
\par\medskip
{\small (c) Jacobi steps in the transfers, classical reference}\par\smallskip
\begin{adjustbox}{max width=\linewidth}
\begin{tabular}{lllllll}\toprule
$m$ & op.\ complexity & build (ms) & channels & $G$ vs $m=4$ & barriers & $G$ vs $m=4$ \\\midrule
0 & 1.27 & 2 & 3.23e-2 & 1.24 & 1.23e-1 & 2.82 \\
1 & 1.82 & 6 & 2.88e-2 & 1.11 & 1.02e-1 & 1.77 \\
2 & 2.63 & 13 & 2.78e-2 & 1.07 & 9.23e-2 & 1.38 \\
4 (used) & 4.54 & 56 & 2.59e-2 & 1.00 & 7.51e-2 & 1.00 \\
8 & 7.44 & 232 & 2.42e-2 & 0.93 & 5.19e-2 & 0.70 \\
\bottomrule\end{tabular}
\end{adjustbox}
\end{table}

\section{Harmonic enrichment: protocol, amendment and results}\label{app:harmonic}
\begin{table}[tbp]\centering\footnotesize\setlength{\tabcolsep}{4pt}
\caption{Post-hoc harmonic enrichment of the frozen OA-VCB predictions (barriers; patch radius 2, fixed on the development set; the fresh confirmation set has 100 new operators per checkpoint). (a) Detector-positive samples / all samples; mean energy error before and after the local correction; $R = 1-\overline{e}_{\mathrm{after}}/\overline{e}_{\mathrm{before}}$ in \% [95\% CI]; $G_{24}$: paired ratio of the patch-indicator control's error to the harmonic correction's [95\% CI]. (b) Per-sample captured fraction of squared energy error, median [quartiles]; on the fixed global-reference subset (number of samples), $R$ of the local and of the global correction, in \%.}\label{tab:harmonic}
{\small (a) Primary outcome}\par\smallskip
\begin{adjustbox}{max width=\linewidth}
\begin{tabular}{llllll}\toprule
set & samples & $e$ before & $e$ after & $R$ (\%) & $G_{24}$ \\\midrule
development (seed 1) & 42/100 & 0.0956 & 0.0876 & 8.3 [5.1, 12.3] & 1.10 [1.06, 1.13] \\
retrospective (seeds 2, 3) & 92/200 & 0.1352 & 0.1170 & 13.5 [9.6, 17.9] & 1.15 [1.11, 1.19] \\
fresh confirmation & 89/200 & 0.1219 & 0.1043 & 14.4 [9.8, 19.5] & 1.13 [1.09, 1.16] \\
\bottomrule\end{tabular}
\end{adjustbox}
\par\medskip
{\small (b) Captured fraction and local versus global extension}\par\smallskip
\begin{adjustbox}{max width=\linewidth}
\begin{tabular}{lllll}\toprule
set & captured fraction & subset size & local $R$ (\%) & global $R$ (\%) \\\midrule
development (seed 1) & 0.09 [0.03, 0.31] & 20 & 9.0 & 17.6 \\
retrospective (seeds 2, 3) & 0.13 [0.03, 0.40] & 40 & 12.2 & 16.2 \\
fresh confirmation & 0.12 [0.02, 0.33] & 40 & 10.9 & 20.9 \\
\bottomrule\end{tabular}
\end{adjustbox}
\end{table}

\paragraph{Correction.} For a frozen prediction $x$ with residual $r = F - Ax$ and a small matrix $Z$ built from $A$ alone
(independent columns), the correction
\begin{gather*}
  x_+ = x + Z(Z^\top AZ)^{-1}Z^\top r,\\
  \|u-x_+\|_A^2 = \|u-x\|_A^2 - (Z^\top r)^\top(Z^\top AZ)^{-1}(Z^\top r),
\end{gather*}
is the $A$-orthogonal projection of the error onto $\operatorname{span}Z$ (exact arithmetic): a sequential correction of the
frozen error, not a joint re-optimisation over the cycle's own directions together with $Z$; the networks are not retrained. An
earlier classical pilot had suggested that an $A$-harmonic tail approximates the variation of the slowest mode across the
barrier; a low eigenmode need not satisfy $Az=0$ outside the island, but the enrichment constructed here is harmonic by
definition. $Z$ contains the indicators and, for each island, its $A$-harmonic extension into a patch of $\rho$ graph rings (zero
outside). A control of equal support replaces the harmonic extension by the indicator of the patch, and a reference uses the
global harmonic extension. Island samples here are those on which the detector of Section~\ref{sec:deflation} returns at least
one vector (42, 92 and 89 samples in the three sets, against 39, 88 and 84 with the geometric label). We report
$R = 1-\overline{e}_{\mathrm{after}}/\overline{e}_{\mathrm{before}}$, the reduction of the mean relative energy error; $G_{24}$,
the paired ratio of the control's error to the harmonic correction's; and the per-sample captured fraction of squared energy
error, $\kappa = 1 - \|u-x_+\|_A^2/\|u-x\|_A^2$.

\paragraph{Protocol.} The protocol (version 2) fixed, before any enrichment number: the frozen checkpoints and the inference
path; the three data sets and their roles (development, retrospective, fresh confirmation); the island label (the detector of
Section~\ref{sec:deflation} with $\theta=0.1$, $\tau=0.05$; an independent geometric label reported only); the corrections
(indicators; local harmonic extensions for $\rho\in\{1,2,4,8\}$; the patch-indicator control; global extensions on the first 20
detector-positive samples per checkpoint, i.e.\ per data seed: 20 / 40 / 40 samples in the three sets); the numerics (columns
normalised in the energy norm, greedy pivoted Cholesky of the scaled Gram matrix with pivot threshold $10^{-10}$; gain computed
as $2d^\top r - d^\top Ad$ and checked against the change of squared error, agreement to $10^{-10}$ relative); the radius rule
($\rho = 2$, the smallest radius within $10\%$ of the best on the seed-1 test set); and the outcome thresholds, ``strong support''
($R\ge0.5$ and $G_{24}\ge1.25$), ``partial support'' ($R\ge0.2$) and ``little support'' otherwise, decided on a fresh
confirmation set of $100$ new coefficient fields per checkpoint, never generated before. Seeds 2 and 3 served as a retrospective
evaluation: they were held out from training and from the radius choice, but their model errors had been inspected before.
\emph{Amendment (version 3), made after the reproduction check and before any enrichment number.} The planned check required the
CPU re-evaluation of the stored predictions to agree with the GPU records to $10^{-4}$ per sample. Means agreed to six digits on
all three seeds and samples without islands to $3\times10^{-5}$, but island samples differed by up to $1.4\times10^{-3}$. On the
same CPU, changing only the batch size moved island samples by up to $1.1\times10^{-4}$ and evaluating the network in double
precision by up to $1.75\times10^{-4}$, consistent with single-precision round-off amplified by near-singular operators, although
these checks do not show that round-off accounts for the whole CPU/GPU difference. The study therefore used the CPU
single-precision path; the criterion was replaced by agreement of the means to $10^{-5}$ and of the non-island samples to $10^{-4}$.

\paragraph{Further results.} The captured fraction is heterogeneous (median $0.12$, quartiles $0.02$ and $0.33$ on the fresh
operators). The classical reference behaves similarly ($R = 18.5\%$ locally on the fresh operators; $15.6\%$ local and $29.2\%$
global on the reference subset; per-seed values, including one seed with partial support, in Table~\ref{tab:harmonicseeds}). The
indicator columns alone gave numerically negligible gains (at most $10^{-19}$ relative), consistent with both outputs already
containing their span. As an observation on the 42 detector-positive development samples, the means of the per-sample regional
fractions of squared energy error are $28\%$ inside the islands, $31\%$ in the barriers and $41\%$ in the connected conductive
region; after the correction the barrier fraction drops to $20\%$. Building the local basis took about $0.6$\,ms and applying the
projection, including the study's diagnostics, about $0.5$\,ms per island sample (median; the detection, $1.5$\,ms, is already
part of OA-VCB).
\begin{table}[tbp]\centering\footnotesize\setlength{\tabcolsep}{4pt}
\caption{Harmonic enrichment per seed, OA-VCB and classical reference: detector-positive samples, $R$ in \% [95\% CI], $G_{24}$, and the outcome category of the protocol.}\label{tab:harmonicseeds}
\begin{adjustbox}{max width=\linewidth}
\begin{tabular}{lllllll}\toprule
set & seed & predictor & samples & $R$ (\%) & $G_{24}$ & outcome \\\midrule
development & 1 & OA-VCB & 42 & 8.3 [5.1, 12.3] & 1.10 & little \\
development & 1 & reference & 42 & 18.9 [14.8, 23.7] & 1.24 & little \\
retrospective & 2 & OA-VCB & 49 & 17.9 [11.6, 24.5] & 1.19 & little \\
retrospective & 2 & reference & 49 & 16.4 [12.2, 21.0] & 1.22 & little \\
retrospective & 3 & OA-VCB & 43 & 8.4 [5.3, 11.8] & 1.11 & little \\
retrospective & 3 & reference & 43 & 14.3 [9.9, 19.4] & 1.21 & little \\
fresh confirmation & 2 & OA-VCB & 37 & 17.8 [9.8, 26.1] & 1.15 & little \\
fresh confirmation & 2 & reference & 37 & 22.7 [18.0, 27.8] & 1.31 & partial \\
fresh confirmation & 3 & OA-VCB & 52 & 11.5 [6.9, 16.6] & 1.11 & little \\
fresh confirmation & 3 & reference & 52 & 15.3 [10.6, 20.8] & 1.22 & little \\
\bottomrule\end{tabular}
\end{adjustbox}
\end{table}

\section{Load tests}\label{app:loads}
\paragraph{Loads.} The random-load distribution used for training (Section~\ref{sec:k0}) draws a constant $c\sim U(-1,1)$ plus a
sine expansion $\sum_{p,q=0}^{3}K_{pq}\sin((p+1)\pi x)\sin((q+1)\pi y)/(1+p+q)$ with $K_{pq}\sim N(0,1)$, to which, with
probability one half, a Gaussian of width $U(0.02,0.08)$ and amplitude $U(2,10)$ centred in $[0.1,0.9]^2$ is added. The smooth
test uses the same finite sine span with a different coefficient distribution and no constant term. The localised test is a
pure, unit-amplitude Gaussian source of fixed width $0.03$ centred in $[0.15,0.85]^2$; its width lies inside the training
interval, so it changes the composition of the forcing rather than its width, and by positive homogeneity its amplitude is
immaterial. The load values $f=\pm1$ lie inside the support of the training distribution, so they test coverage of that
distribution's extreme members; neither test is extrapolation beyond the support, and a different generating procedure does not
by itself place a load outside it. A miss of the per-seed criterion by one sample is a threshold outcome, not evidence of a sharp
boundary.
\begin{table}[tbp]\centering\footnotesize\setlength{\tabcolsep}{4pt}
\caption{The fixed-load ($f=1$) OA-VCB checkpoints of Table~\ref{tab:core} applied to other loads (channels, held-out seeds 1--3, 300 operators); $u_c$ is recomputed for each load. Mean energy errors, reference/OA-VCB $G$ per seed and pooled, $W$, and the number of samples on which OA-VCB is worse than $u_c$.}\label{tab:rhs}
\begin{adjustbox}{max width=\linewidth}
\begin{tabular}{llllllll}\toprule
load & $u_c$ & OA-VCB & reference & $G$ per seed & $G$ pooled [95\% CI] & $W$ & $>u_c$ \\\midrule
$f=1$ (training load) & 0.373 & 2.75e-3 & 2.57e-2 & 10.23 / 9.76 / 9.68 & 9.89 [9.57, 10.20] & 1.00 & 0 \\
$f=-1$ & 0.373 & 1.37e-1 & 2.57e-2 & 0.30 / 0.28 / 0.11 & 0.21 [0.20, 0.23] & 0.00 & 0 \\
$f=3$ & 0.373 & 2.75e-3 & 2.57e-2 & 10.23 / 9.76 / 9.68 & 9.89 [9.57, 10.20] & 1.00 & 0 \\
random-load distribution (Sec.~\ref{sec:k0}) & 0.397 & 1.01e-1 & 2.59e-2 & 0.66 / 0.74 / 0.45 & 0.60 [0.50, 0.72] & 0.37 & 0 \\
smooth, both signs & 0.386 & 1.55e-1 & 2.49e-2 & 0.25 / 0.29 / 0.12 & 0.20 [0.19, 0.23] & 0.06 & 0 \\
localised source & 0.635 & 1.08e-1 & 2.21e-2 & 0.38 / 0.54 / 0.42 & 0.44 [0.38, 0.51] & 0.32 & 0 \\
\bottomrule\end{tabular}
\end{adjustbox}
\end{table}

\begin{table}[tbp]\centering\footnotesize\setlength{\tabcolsep}{4pt}
\caption{The OA-VCB checkpoints trained on the random-load distribution (Section~\ref{sec:k0}; one model per held-out data seed 1--3) on the loads of Table~\ref{tab:rhs}, same operators. (a) Pooled over the 300 operators: mean energy errors of the random-load models and of the reference, reference/random-model $G$ [95\% CI] and $W$, and samples worse than $u_c$. (b) Per seed (1 / 2 / 3): reference/random-model $G$ and $W$; mean error of the fixed-load models of Table~\ref{tab:rhs}; and pooled fixed-load/random-load $G$ and $W$ (above 1 and above 0.5 favour the random-load models). The prespecified criterion per seed and load family was $G\ge2$ and $W\ge0.8$.}\label{tab:rhsrandom}
{\small (a) Pooled}\par\smallskip
\begin{adjustbox}{max width=\linewidth}
\begin{tabular}{llllll}\toprule
load & random-load & reference & $G$ [95\% CI] & $W$ & $>u_c$ \\\midrule
$f=1$ & 3.99e-3 & 2.57e-2 & 6.87 [6.62, 7.13] & 1.00 & 0 \\
$f=-1$ & 4.21e-3 & 2.57e-2 & 6.51 [6.27, 6.75] & 1.00 & 0 \\
$f=3$ (homogeneity check) & 3.99e-3 & 2.57e-2 & 6.87 [6.62, 7.13] & 1.00 & 0 \\
random-load distribution (Sec.~\ref{sec:k0}) & 4.43e-3 & 2.59e-2 & 6.22 [5.99, 6.44] & 1.00 & 0 \\
smooth, both signs & 4.00e-3 & 2.49e-2 & 6.38 [6.20, 6.56] & 1.00 & 0 \\
localised source & 1.25e-2 & 2.21e-2 & 2.51 [2.25, 2.79] & 0.84 & 0 \\
\bottomrule\end{tabular}
\end{adjustbox}
\par\medskip
{\small (b) Per seed, and against the fixed-load models}\par\smallskip
\begin{adjustbox}{max width=\linewidth}
\begin{tabular}{llllll}\toprule
load & $G$ per seed & $W$ per seed & fixed-load & fixed/random $G$ & $W$ \\\midrule
$f=1$ & 6.90 / 6.75 / 6.98 & 1.00 / 1.00 / 1.00 & 2.75e-3 & 0.70 & 0.01 \\
$f=-1$ & 6.67 / 6.39 / 6.47 & 1.00 / 1.00 / 1.00 & 1.37e-1 & 30.8 & 1.00 \\
$f=3$ (homogeneity check) & 6.90 / 6.75 / 6.98 & 1.00 / 1.00 / 1.00 & 2.75e-3 & 0.70 & 0.01 \\
random-load distribution (Sec.~\ref{sec:k0}) & 6.27 / 6.05 / 6.34 & 1.00 / 1.00 / 1.00 & 1.01e-1 & 10.3 & 0.86 \\
smooth, both signs & 6.41 / 6.31 / 6.42 & 1.00 / 1.00 / 1.00 & 1.55e-1 & 31.1 & 1.00 \\
localised source & 2.80 / 2.15 / 2.61 & 0.89 / 0.79 / 0.83 & 1.08e-1 & 5.7 & 0.99 \\
\bottomrule\end{tabular}
\end{adjustbox}
\end{table}

\section{Further settings}\label{app:settings}
\begin{table}[tbp]\centering\footnotesize\setlength{\tabcolsep}{4pt}
\caption{Further settings: mean energy error of OA-VCB and of its classical reference, paired reference/OA-VCB $G$ [95\% CI] and $W$, and the per-sample number of two-grid PCG iterations, from the same starting point, at which PCG first reaches OA-VCB's energy error (median, range; computed iteration histories: 12 for $k_0=0$, 16 otherwise). The anisotropic tensor-coefficient family is reported in Table~\ref{tab:aniso}.}\label{tab:settings}
\begin{adjustbox}{max width=\linewidth}
\begin{tabular}{llllll}\toprule
setting & OA-VCB & reference & $G$ [95\% CI] & $W$ & PCG its \\\midrule
$k_0=0$ (no external coarse solution), seed 1 & 7.62e-3 & 8.52e-2 & 11.39 [10.95, 11.83] & 1.00 & 5 (4--7) \\
$k_0=0$ (no external coarse solution), seed 2 & 7.58e-3 & 8.04e-2 & 10.91 [10.35, 11.44] & 1.00 & 5 (4--6) \\
$k_0=0$ (no external coarse solution), seed 3 & 8.13e-3 & 8.55e-2 & 10.82 [10.40, 11.26] & 1.00 & 5 (5--7) \\
finer hierarchy (L4, $k_0=3$), seed 1 & 2.21e-3 & 2.01e-2 & 9.72 [9.04, 10.33] & 1.00 & 6 (4--8) \\
polygonal domain, seed 1 & 3.54e-3 & 3.30e-2 & 9.39 [8.89, 9.86] & 1.00 & 6 (5--8) \\
random loads (in distribution), seed 1 & 4.50e-3 & 2.65e-2 & 6.27 [5.89, 6.65] & 1.00 & 5 (4--7) \\
random loads (in distribution), seed 2 & 4.26e-3 & 2.51e-2 & 6.05 [5.72, 6.36] & 1.00 & 5 (4--7) \\
random loads (in distribution), seed 3 & 4.54e-3 & 2.61e-2 & 6.34 [5.90, 6.77] & 1.00 & 6 (4--7) \\
\bottomrule\end{tabular}
\end{adjustbox}
\end{table}

\begin{table}[tbp]\centering\footnotesize\setlength{\tabcolsep}{4pt}
\caption{Supplementary scope check: anisotropic tensor-coefficient family (meandering tows, $k_L=100$, $k_T=5$ against a unit background, insulated holes), one separately trained OA-VCB per data seed with the original protocol and initialisation seed $0$; $100$ test operators per seed. Mean relative energy errors, paired $G$ of the degree-four Chebyshev reference to OA-VCB with the 95\% bootstrap interval, and $W$. Data seed $0$ is the development seed, so the three rows are three configurations including the development one, not three independent held-out replications. Stronger classical smoothing budgets, Krylov-space controls and matched-accuracy solver costs were not evaluated for this family.}\label{tab:aniso}
\begin{tabular}{llllll}\toprule
data seed & status & OA-VCB & reference & $G$ [95\% CI] & $W$ \\\midrule
0 & development & 6.21e-03 & 2.10e-02 & 3.50 [3.33, 3.68] & 1.00 \\
1 & subsequent evaluation & 6.01e-03 & 2.04e-02 & 3.51 [3.33, 3.69] & 1.00 \\
2 & subsequent evaluation & 6.17e-03 & 2.12e-02 & 3.58 [3.40, 3.77] & 1.00 \\
\bottomrule\end{tabular}\end{table}

\paragraph{No external coarse solution ($k_0 = 0$).} The networks start from $u=0$; the network and the reference still contain
their exact 28-node coarsest solve. Two-grid PCG from zero first reaches OA-VCB's error after a median of $5$ iterations. 
\paragraph{Finer hierarchy.} $L=4$, $k_0=3$, 14{,}398 unknowns, one held-out seed; two-grid PCG first reaches OA-VCB's error after a
median of $6$ iterations (range $4$--$8$). This is not a mesh-refinement study at fixed information ($k_0$ changes with $L$).
\paragraph{Refinement without retraining.} The L4-trained model was applied unchanged to the same coefficient draws on the next
two refined meshes of held-out seed 1, with the reference on the same hierarchy. At L5 (58{,}494 unknowns, $k_0 = 4$; the
four-level cycle now ends on mesh level 1 instead of 0) it keeps $G = 8.12$ [7.50, 8.67] with $W = 1$ on all 100 test samples
(smallest per-sample ratio $1.65$) and is never worse than $u_c$; paired with the L4 results for the same coefficient draws
($G = 9.72$), its per-sample gain is $0.84$ [0.76, 0.91] times the L4 gain, smaller at L5 on $81\%$ of the samples. At L6
(235{,}774 unknowns, $k_0 = 5$, bottom on mesh level 2) the evaluation was stopped by the session time limit of the computing
platform after the first 21 of the 24 planned test samples (ids 300--320) and wrote no per-sample record; the values are taken from
the per-sample lines of the run log (four significant digits; rounding moves $G$ by less than $0.005$). On these completed cases
$G = 4.18$ [3.43, 4.88] with $W = 1$ and a smallest per-sample ratio of $1.06$, about half the L5 gain on the same draws (paired
ratio $0.51$ [0.46, 0.56]). The interval describes the completed cases; it does not account for the missing cases. The original
logs and the L5 record are kept with the code archive (\path{Results_kaggle_session 11/}). A model trained at L3 does not keep its margin
at L4 ($G = 1.95$, $W = 0.95$, 20 samples; $1.97$ with the cycle depth matched). In this evaluation, operator data and inference took
about $64$\,s per sample at L5 and $1{,}370$\,s at L6 on one CPU shared by concurrent jobs; these are not comparable with
Table~\ref{tab:cost}.
\paragraph{Polygonal domain.} A non-convex polygon with a re-entrant corner, a different unstructured coarse mesh with a 45-node
coarsest level: $G = 9.39$, $W=1$, one held-out seed.
\paragraph{Random loads, in distribution.}\label{sec:k0} Trained and tested on the distribution of \appref{app:loads}, one training run per data
seed: $G = 6.27$ [5.89, 6.65] on seed 1, which meets the outcome rule fixed before the run (``strong'' if $G\ge2$ and $W\ge0.8$), and
$6.05$ and $6.34$ on seeds 2 and 3, with the same rule fixed beforehand for both, all with $W = 1$.
\paragraph{Anisotropic tensor coefficient.} Meandering fibre tows with $K = k_T I + (k_L - k_T)\,t\,t^\top$ along the band direction
$t$, $k_L = 100$, $k_T = 5$, unit background, insulated holes: $G = 3.50$, $3.51$ and $3.58$ on data seeds $0$, $1$ and $2$, with $W=1$ (Table~\ref{tab:aniso}). Data seed $0$ was the development
seed of the whole study, so the three rows are three configurations including the development one, not three independent
held-out replications; each used the original protocol with initialisation seed $0$ and a separate training run. We do not
certify the training and model-selection history of this family beyond that statement. Here the reference is already more accurate than the plain
classical cycle (ratio of mean errors $1.37$). The detector returns at least one vector on $9$--$10\%$ of the samples (at most two);
with insulated holes the coarsest level has 52 nodes.
\paragraph{Plane-strain elasticity (preliminary).}\label{sec:elasticity}
\begin{table}[tbp]\centering\footnotesize\setlength{\tabcolsep}{4pt}
\caption{Plane-strain elasticity (stiff bands, $E_f/E_m=100$, traction-free pores; one data set, test samples 300--399; preliminary). Mean energy error, and paired reference/model $G$ [95\% CI] and $W$. OA-VCB has one training run. Rows for the learned baselines of the original study are omitted here and kept in the archive. Two-grid PCG first reaches OA-VCB's per-sample energy error after a median of 11 iterations (range 4--32).}\label{tab:elasticity}
\begin{adjustbox}{max width=\linewidth}
\begin{tabular}{lrlll}\toprule
model & params & energy & $G$ [95\% CI] & $W$ \\\midrule
OA-VCB (ours) & 20{,}672 & 2.47e-2 & 2.32 [2.20, 2.44] & 1.00 \\
\midrule
classical reference & -- & 5.50e-2 & 1 & -- \\
classical V(4,4), no proj. & -- & 7.53e-2 & 0.72 [0.71, 0.73] & 0.00 \\
$u_c$ alone & -- & 3.87e-1 & 0.13 [0.13, 0.14] & 0.00 \\
\bottomrule\end{tabular}
\end{adjustbox}
\end{table}

The same construction applies to vector problems on the degree-of-freedom graph: the prolongations are $P_k\otimes I_2$ with the
Jacobi steps of \eqref{eq:jacobi} on the vector operator, which reproduce the rigid-body modes where the hypotheses of
Proposition~\ref{prop:constants} hold (\appref{app:impl}). We consider plane strain (Poisson ratio $0.3$) on the perforated square,
clamped on the outer boundary, traction-free on the holes, with a downward body force and a Young's modulus that is $100$ times
larger in four meandering bands than in the matrix; $L=3$ (7{,}356 unknowns) and $k_0=2$. The scalar island detector does not
apply, and the bands reach the clamped boundary, so OA-VCB is used without deflation, with its Galerkin output and otherwise the
configuration of the scalar problems (no tuning for elasticity). Its classical reference is the same Chebyshev V$(4,4)$ cycle with
Galerkin output; the contained fixed-interval Chebyshev member is slightly more accurate than it and the contained Jacobi member
$1.82$ times less accurate (Table~\ref{tab:contained}). This experiment has one data set with a held-out test split, no independent
data-seed replications and one OA-VCB training run. OA-VCB is more accurate than its reference on every test sample,
$G = 2.32$ [2.20, 2.44], and two-grid PCG first reaches its energy error after a median of $11$ iterations (range $4$--$32$;
Table~\ref{tab:elasticity}). One preliminary case does not establish broad applicability to mechanics.

\section{Computing platforms and reproducibility}\label{app:platform}
\paragraph{Platforms.} Training, the stored evaluation records and the GPU inference times: NVIDIA T4 GPUs (Kaggle). Cost table
(Table~\ref{tab:cost}): one Intel Xeon processor at 2.1\,GHz, 2 threads. Stage 1 timings (Table~\ref{tab:s1cost}): one Intel Xeon
processor at 2.10\,GHz, 2 threads (\appref{app:stage1}). Stronger-smoother measurement (Table~\ref{tab:avt},
Figure~\ref{fig:avt}): a different machine, one Intel Xeon processor at 2.80\,GHz, 2 threads, on which OA-VCB was re-timed; its
CPU re-evaluation reproduces the stored OA-VCB errors to $5.6\times10^{-6}$ relative on channels and, on barriers, to within the
CPU/GPU round-off envelope documented in \appref{app:harmonic} (one sample at $1.4\times10^{-3}$, median $3\times10^{-6}$).
Timing scopes: classical rows report the mean over samples of the per-sample minimum over repetitions; OA-VCB's operator data
and inference report the minimum over repetitions of the whole-test-set time divided by the number of samples (inference in
batches of eight). Software: Python 3.13, PyTorch 2.14 (CPU), NumPy 2.5, SciPy 1.18 with OpenBLAS 0.3.34, PyAMG 5.3, Matplotlib
3.11; sparse direct solves with SuperLU.
\paragraph{Versions and regeneration.} Every training and evaluation record stores the hash \texttt{f8653612eb84}, an MD5
prefix of the top-level model and training sources computed by \path{run_identity.py}; it identifies the model code of every
stored record, not the diagnostic scripts in \path{diagnostics_cost/} and \path{mathcheck/}, the Kaggle drivers in
\path{kaggle_runs/}, the Stage 1 scripts in \path{ext/} or the table and figure generators. The archived release
\cite{ranjan2026release} identifies all of these by a SHA-256 list of every file (\path{RELEASE_SHA256.txt}). The original-study
tables are regenerated from the stored per-sample records by \path{manuscript/make_tables.py}, which also writes
\path{manuscript/tables/numbers.txt} and \path{manuscript/tables/MANIFEST.txt} (the 91 input files used); the Stage 1 tables, and
the filtered original tables that omit the learned-baseline rows, by \path{manuscript/make_tables_stage1.py}, which writes
\path{tables/numbers_stage1.txt} and \path{tables/MANIFEST_stage1.txt} (SHA-256 of every input file). Figures~\ref{fig:setup},
\ref{fig:avt} and~\ref{fig:errormaps} are regenerated by \path{manuscript/figures/make_figures.py} and Figure~\ref{fig:keyfig} by
\path{manuscript/figures/make_figures_stage1.py}. The command sequence is given in the release's \path{README.md}.

\clearpage%
\bibliographystyle{elsarticle-num}
\bibliography{refs}
\end{document}